\newcommand{\N}{\mathbb{N}}
\newcommand{\Z}{\mathbb{Z}}
\newcommand{\Q}{\mathbb{Q}}
\newcommand{\R}{\mathbb{R}}
\newcommand{\C}{\mathbb{C}}
\newcommand{\HH}{\mathbb{H}}
\newcommand{\PGL}{\mathbf{PGL}}
\newcommand{\SL}{\mathbf{SL}}
\newcommand{\PSL}{PSL}
\newcommand{\SO}{\mathrm{SO}}
\newcommand{\GL}{\mathbf{GL}}
\newcommand{\vol}{\text{vol}}
\newcommand{\cO}{\mathcal{O}}
\newcommand{\hhat}{\widehat{h}}
\newcommand{\id}{\mathrm{Id}}
\newcommand{\covol}{\mathrm{covol}}
\newcommand{\ram}{\textrm{Ram}}
\newcommand{\tr}{\mathrm{tr}}
\newtheorem{theorem}{Theorem}[section]
\newtheorem{corollary}[theorem]{Corollary}
\newtheorem{lemma}[theorem]{Lemma}
\newtheorem{definition}[theorem]{Definition}
\newtheorem{question}[theorem]{Question}
\theoremstyle{remark}
\newtheorem{remark}[theorem]{Remark}
\theoremstyle{remark}
\newtheorem{example}[theorem]{Example}
\renewcommand{\qed}{\hfill$\scriptstyle\blacksquare$}
\title{A strong height gap theorem for $\PGL_2$}
\author[M. Belolipetsky]{Mikhail Belolipetsky}
\address{IMPA, Estrada Dona Castorina 110, 22460-320 Rio de Janeiro, Brazil}
\email[]{mbel@impa.br}
\author[S. Hurtado]{Sebastian Hurtado} 
\address{Department of Mathematics, Yale University, New Haven, CT 06511, USA}
\email[]{sebastian.hurtado-salazar@yale.edu}
\begin{document}

\begin{abstract}

The height gap theorem states that the finite subsets $F$ of matrices generating non-virtually solvable groups have normalized height $\hhat(F)$ bounded below by a constant. It was first proved by Breuillard and another proof was given later by Chen, Hurtado and Lee. In this paper we show that when the set $F$ is contained in a maximal arithmetic subgroup  $\Gamma$ of $G = \PGL_2(\R)^a\times\PGL_2(\C)^b$, $a+b \ge 1$, the height bound  for the case when $F$ generates a Zariski dense subgroup of $G$ over $\R$ is proportional to $\log(\covol(\Gamma))$, the function of the covolume of $\Gamma$. This result strengthens the theorem for the lattices of large covolume and has various applications including a strong version of the arithmetic Margulis lemma for $\PGL_2(\R)^a\times\PGL_2(\C)^b$. 

\end{abstract}

\maketitle

\section{Introduction}

The height gap theorem that was proved by Breuillard in \cite{Breu11} is a powerful tool in the study of lattices in linear groups. It can be thought of as a non-abelian analog of Lehmer's Mahler measure problem in number theory and as a global adelic analog of the Margulis Lemma in hyperbolic geometry. An alternative proof of Breuillard's theorem that uses the existence of almost laws on compact groups and is much more elementary was given by Chen, Hurtado and Lee in \cite{CHL21}. In \cite{FHR22}, Fraczyk, Hurtado and Raimbault showed that the height gap theorem implies a strengthening of the Margulis lemma for arithmetic subgroups.  This new arithmetic Margulis lemma has already found applications in solutions of open problems and new proofs of important results (cf. \cite{BCDT24}, \cite{FH23}, \cite{FHR22}).  

In this paper we prove a stronger version of the height gap theorem for subsets of arithmetic lattices in $G = \PGL_2(\R)^a\times\PGL_2(\C)^b$ and deduce from it a strong arithmetic Margulis lemma for $G$. The strengthening refers to relating the gap to the covolume of an arithmetic subgroup. These results have interesting applications some of which are considered in the last section of the paper.  

The idea behind our strong gap theorem is to apply the method of \cite{CHL21} to bound the height of a generic element of a group $\Gamma$ by a function of arithmetic invariants of $\Gamma$ and then use Borel's volume formula \cite{Bor81} to relate this bound to the covolume of a maximal lattice containing $\Gamma$. It is important for this argument to notice that generic elements capture a large amount of arithmetic information about the group, while the  uniform exponential growth for linear groups first established by Eskin, Mozes and Oh in \cite{EMO05} allows us to effectively detect these elements.

In order to state the main result we need to recall the notion of \emph{height}. Let $K$ be a number field and let $V_K$ be the set of absolute values on $K$ up to equivalence, which is a union of  archimedean places $V_\infty$ (corresponding to real and complex embeddings of $K$ into $\R$ or $\C$) and non-archimedean places $V_f$ (corresponding to prime ideals in the ring of integers $\cO_K$ of $K$). For $v\in V_K$, let $K_v$ be the corresponding completion of $K$ and define $n_v = [K_v: \Q_p]$, if  $v \vert p$, $n_v = 1$, if $K_v = \R$, and $n_v = 2$, if $K_v = \C$. For a vector $x = (x_1,x_2, \dots, x_m) $ in $K_v^m$, we define $\|x\|_{v} = \max_{i =1}^m |x_i|_v$ if $v$ is non-archimedean and $\|x\|_v = \sqrt{|x_1|_v^2 + \dots + |x_m|_v^2 }$ if $v$ is archimedean.

For a matrix $A$ in $\GL_m(K)$, let $\|A\|_v$ be the operator norm.  The \emph{height} of $A$ is defined as
$$h(A) = \frac{1}{[K:\Q]} \sum_{v \in V_K} n_v\log^{+} \|A\|_{v},$$ 
where $\log^{+}(x) = \log \max (|x|, 1)$ for a real number $x$. For a finite set $F$ of matrices in $\GL_m(K)$, the height of $F$ is defined as
$$h(F) = \max_{A \in F} h(A),$$ 
and the \emph{normalized height} is defined by 
$$\hhat (F) = \lim_{n \to \infty} \frac{h (F^n)}{n},$$ 
where $F^n$ consists of all products of $n$ elements in $F$.
The limit above exists because $h(F^n)$ is sub-additive, which also implies $h(F) \geq \hhat(F)$.

Our main result is the following strengthening of the height gap theorem of Breuillard \cite{Breu11, CHL21}:  

\begin{theorem}\label{thm1}
Let $G = \PGL_2(\R)^a\times\PGL_2(\C)^b$, $a+b \ge 1$. There exists a positive constant $c_G$ (depending only on $G$) with the following property. Consider an arithmetic lattice $\Gamma \subset G$ defined over a field $k$ and  let $F\subset \Gamma$ be a finite subset. Then either
\begin{enumerate}
\item The subgroup generated by $F$ is not Zariski dense in $G$ over $\R$, or
\item $\displaystyle\hhat(F) > c_G\max\left(\frac{\log(\covol(\Gamma_1))}{[k:\Q]^2}, 1\right)$, where $\Gamma_1$ is a congruence subgroup (e.g., a maximal lattice) containing $\Gamma$.
\end{enumerate}
\end{theorem}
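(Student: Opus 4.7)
The plan is to combine three ingredients suggested in the introduction: uniform exponential growth to locate arithmetically generic elements inside bounded-length words in $F$; the almost-law method of \cite{CHL21} to bound the height of such words by a constant multiple of $\hhat(F)$; and Borel's volume formula \cite{Bor81} to translate arithmetic information into an upper bound on $\covol(\Gamma_1)$.

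First I would dispose of the trivial case: if $\hhat(F) \geq c_G$ for a suitably small constant, the conclusion holds by the ordinary height gap theorem of \cite{Breu11,CHL21}, so I may assume $\hhat(F)$ is small. Assuming also that $\langle F \rangle$ is Zariski dense in $G$ over $\R$, I invoke the Eskin--Mozes--Oh theorem \cite{EMO05} to find an integer $N=N(G)$, depending only on $G$, such that $F^N$ contains elements which are \emph{arithmetically generic} in the following sense: suitable words of length at most $N$ in $F$ produce traces that generate the invariant trace field $k$ of $\Gamma$ and, together, determine the invariant quaternion algebra $A/k$ and enough of its ramification to feed into Borel's formula.

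Next I would apply the almost-law argument of \cite{CHL21}: every element $g \in F^N$ satisfies $h(g) \leq C_N\, \hhat(F)$, where $C_N$ depends only on $N$ (and hence only on $G$). The key arithmetic step is then to bound the invariants of $k$ and $A$ in terms of the heights of the traces of such generic elements. Since each $\tr(g) \in k$ is an algebraic number whose height is controlled by $h(g)$, a Lehmer/Silverman-type estimate gives $\log |d_k| \leq C'\, [k:\Q]\, \hhat(F)$, and the norms of the finite primes of $k$ ramifying in $A$ are similarly bounded by $\exp(O([k:\Q]\, \hhat(F)))$.

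Finally I would substitute into Borel's formula, which for a maximal arithmetic lattice containing $\Gamma$ has the schematic shape
\[
\log \covol(\Gamma_1) \;\ll\; [k:\Q]\log |d_k| \;+\; \sum_{v \in \ram(A)\cap V_f} \log N(v) \;+\; O([k:\Q]),
\]
yielding $\log\covol(\Gamma_1) \leq C''\, [k:\Q]^2\, \hhat(F)$, which rearranges to the claimed inequality. Passing from the maximal lattice to an arbitrary congruence supergroup $\Gamma_1\supseteq\Gamma$ is harmless since $\covol(\Gamma_1)\leq \covol(\Gamma)$ and $k,A$ are commensurability invariants. The main obstacle I expect is Step 2/3: producing, uniformly in $G$, a bounded-length collection of words in $F$ whose traces not only generate $k$ but also pin down the ramification of $A$ with enough precision to match the $[k:\Q]^2$ exponent required by Borel's formula. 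Most of the technical work should concentrate there, with the remainder being careful bookkeeping of constants.
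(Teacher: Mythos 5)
Your high-level architecture---generic elements via Eskin--Mozes--Oh, an almost-law word, then Borel's volume formula---matches the paper's. But several of your intermediate claims are wrong in ways that matter, and one genuine step is missing.

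First, the almost law does not give ``$h(g)\leq C_N\,\hhat(F)$ for every $g\in F^N$''; that inequality points in the wrong direction (one always has $\hhat(F)\geq h(\alpha)/n$ for an eigenvalue $\alpha$ of a word of length $n$, and $h(g)\geq\hhat(g)$, not $\leq$). The actual role of the almost law in this argument is different: by choosing $w_0$ to be an $\epsilon$-almost law on the maximal compact $\mathrm{O}_3^a\times\mathrm{O}_4^b$ and composing with a double commutator, the generic element $W=w(A_0,B_0)$ has Galois conjugates of its eigenvalue $\alpha$ that are $\epsilon$-close to $1$ at all ramified archimedean places. This is what makes the discriminant sum $\sum\log|\alpha_i-\alpha_j|$ tractable: it forces $\log|\alpha|\gtrsim d$ and $\log|\alpha|\gtrsim \tfrac{1}{d}\log\Delta_\ell$, and hence $\hhat(F)\gtrsim 1$ and $\hhat(F)\gtrsim\tfrac{1}{d^2}\log\Delta_\ell$.

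Second, your claimed bound $\log|d_k|\leq C'[k:\Q]\,\hhat(F)$ (linear in $[k:\Q]$) is precisely the improvement the authors explicitly flag as open (Question~\ref{optimal}); what is provable by these methods is the quadratic bound. Your Borel-formula schematic also inserts a spurious factor of $[k:\Q]$ in front of $\log|d_k|$ (Borel's formula has $\Delta_k^{3/2}$, i.e.\ coefficient $3/2$, not $d_k$), so the two errors cancel to land on the correct quadratic exponent---but both intermediate statements are incorrect.

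Third, and most importantly, the step you describe as ``harmless''---passing to an arbitrary congruence $\Gamma_1\supseteq\Gamma$---is where a genuine argument is required and your proposal has a gap. Formula~\eqref{formula:maxcovolume} gives the \emph{minimal} covolume in the commensurability class; other maximal subgroups $\Gamma_S$ (parametrized by finite $S\subset V_f$ disjoint from $\ram_f(A)$) have covolume larger by $\approx\prod_{\mathcal{P}\in S}(N(\mathcal{P})+1)$, and the primes in $S$ are \emph{not} controlled by the ramification of $A$, so your ramification bound does not touch them. The observation $\covol(\Gamma_1)\leq\covol(\Gamma)$ is useless because $\covol(\Gamma)$ is not bounded. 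What is needed (and what the paper does in Section~\ref{sec:step5}) is to use the action on the Bruhat--Tits trees: for $\mathcal{P}\in S$, $\Gamma_1$ stabilizes an edge, so an index-$2$ subgroup fixes the edge, hence its reduction mod $\mathcal{P}$ lies in a solvable (derived length $2$) subgroup of $\PGL_2(\mathbb{F}_q)$; since $W$ was built as an almost law applied to a \emph{double commutator}, $W$ reduces to the identity mod every $\mathcal{P}\in S$. This yields $\prod_{\mathcal{P}\in S}N(\mathcal{P})\leq|N_{k/\Q}(\alpha+1/\alpha-2)|\lesssim|\alpha|^{O(1)}$, which bounds $[\Gamma_{\mathfrak{D}}:\Gamma_S]$ and hence $\covol(\Gamma_S)$, and a similar congruence argument handles general $\Gamma(\mathcal{I})$. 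Without this, the theorem would only bound the minimal covolume in the commensurability class, not the covolume of a given congruence $\Gamma_1\supseteq\Gamma$.
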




\begin{remark}\label{rem1}
If $G = \PGL_2(\R)$, then a non Zariski dense subgroup is virtually solvable and the condition on $F$ can be stated in a more familiar form, i.e. here in part (1) we claim that $\langle F \rangle$ is virtually solvable or, equivalently for this case, a virtually abelian group. For the  other groups $G$ this implication is not always true and there are more possible options for the groups generated by $F$. 
A basic example is given by a sequence of Bianchi subgroups $\Gamma_D = \PSL_2(\Z[\sqrt{-D}])$, $D>0$, in $G = \PGL_2(\C)$. All the Bianchi groups contain the subgroup $\PSL_2(\Z)$ and they have the corresponding maximal lattices $\Gamma_{1,D}$ with $\covol(\Gamma_{1,D}) \to \infty$ as $D\to\infty$. The Zariski closure of $\PSL_2(\Z)$ in $G$ over $\R$ is $\PGL_2(\R)$, hence here we can choose the sets $F$ in (1) that generate not Zariski dense and not virtually solvable subgroups.
\end{remark}

\begin{remark}\label{rem2}
By means of some case by case considerations it is possible to deduce more precise information about the subgroup generated by $F$. For example, let $G = \PGL_2(\C)$ and $H$ is a maximal proper real Zariski closed subgroup of $G$. Considering the real subalgebras of the Lie algebra of $G$ we quickly deduce that $H$ is conjugate to the Borel subgroup of the upper triangular matrices, or to the maximal compact subgroup $\SO_3$, or to the real Lie group $\PGL_2(\R)$. In the first case the discrete subgroups of $H$ are virtually abelian, in the second case they are finite, and in the third case they are Fuchisan groups. Moreover, the classification of arithmetic subgroups of $\PGL_2(\C)$ implies that the latter case is possible only for arithmetic subgroups of type~I (i.e. those arithmetic subgroups which are defined by quadratic forms, see \cite[Theorem~10.2.3]{MR-book} and \cite{BBKS23}). Therefore, for type~II and type~III arithmetic subgroups the set $F$ in part (1) of the alternative for $G = \PGL_2(\C)$ generates a virtually abelian group.
\end{remark}

This discussion suggests an equivalent formulation of the gap theorem which in some cases will be more convenient:

\begin{theorem}\label{thm2}
Under the assumptions of Theorem~\ref{thm1} we have the following alternative:
\begin{enumerate}
\item The subgroup $\langle F \rangle$ is virtually solvable, or
\item The Zariski closure $H$ of $\langle F \rangle$ in $G$ over $\R$ is semi-simple, $\Gamma' = \Gamma\cap H$ is an arithmetic lattice in $H$ defined over a field $l$, and for any congruence subgroup $\Gamma_1$ of $H$ containing $\Gamma'$ we have
$$\displaystyle\hhat(F) > c_G\max\left(\frac{\log(\covol(\Gamma_1))}{[l:\Q]^2}, 1\right).$$
\end{enumerate}

\end{theorem}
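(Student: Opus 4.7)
The plan is to derive Theorem~\ref{thm2} from Theorem~\ref{thm1} by a structural analysis of the real Zariski closure $H$ of $\langle F\rangle$ in $G$. Applying Theorem~\ref{thm1} to $F\subset\Gamma$, the Zariski-dense alternative is immediately case (2) of Theorem~\ref{thm2} with $H=G$, $\Gamma'=\Gamma$, and $l=k$. In the other alternative $\langle F\rangle$ fails to be Zariski dense in $G$; if in addition $\langle F\rangle$ is virtually solvable, we land in case (1) of Theorem~\ref{thm2} and nothing more is needed. So the substantive situation is when $\langle F\rangle$ is neither Zariski dense in $G$ nor virtually solvable, and the task reduces to (i) showing $H$ is semisimple and $\Gamma'=\Gamma\cap H$ is an arithmetic lattice in $H$ defined over a suitable subfield $l$, and (ii) producing the height bound relative to $\Gamma'$.

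For (i), I would use the quaternion algebra description of $\Gamma$: up to commensurability $\Gamma$ arises from the units of an order in a quaternion algebra $A$ over $k$. Lift $F$ to $\tilde F\subset A^\times$ and let $B=k[\tilde F]$ be the $k$-subalgebra generated. The only $k$-subalgebras of a quaternion algebra are $k$, quadratic subfields of $A$, and $A$ itself; the first two are commutative and would force $\langle F\rangle$ to be virtually abelian, so non-solvability of $\langle F\rangle$ forces $B=A$, and in particular $\tilde F$ is Zariski dense in $\mathbf{PGL}_1(A)$ over $k$. I would then show that $H$ is the image in $G$ of $\mathrm{Res}_{l/\Q}(\mathbf{PGL}_1(A_l))(\R)$ for a subfield $l\subset k$ and an $l$-quaternion algebra $A_l$, where $l$ is determined by the Galois orbit of archimedean embeddings on which $\tilde F$ appears diagonally; semisimplicity of $H$ then follows because applying the same $B=A$ trichotomy at the level of $l$ and $A_l$ excludes any non-trivial solvable radical from $H^\circ$. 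Consequently $\Gamma'=\Gamma\cap H$ is commensurable with the units of an order in $A_l$ and is therefore an arithmetic lattice in $H$ defined over $l$.

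For (ii), $\langle F\rangle$ is by construction Zariski dense in $H$ over $\R$. Since $H$ itself has the form $\PGL_2(\R)^{a'}\times\PGL_2(\C)^{b'}$ up to compact $\mathrm{SO}_3$ factors arising from ramified real places of $A_l$ (which do not affect the height gap argument), Theorem~\ref{thm1} applies inside $H$ to $F\subset\Gamma'$, and its Zariski-dense alternative yields
$$\hhat(F) > c_H\,\max\!\left(\frac{\log\covol(\Gamma_1)}{[l:\Q]^2},\,1\right)$$
for every congruence subgroup $\Gamma_1\subset H$ containing $\Gamma'$. The possible shapes of $H\subset G$ are controlled by the signature $(a,b)$ of $G$ and are therefore finite in number, so the constants $c_H$ can be absorbed into a single constant depending only on $G$, giving the claimed inequality.

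The main obstacle will be the semisimplicity statement in step (i): ruling out a non-trivial solvable radical in the real Zariski closure of a non-virtually-solvable subgroup of $\Gamma$. This is an arithmetic rigidity phenomenon extracted from the $k$-subalgebra trichotomy together with the coupling of the archimedean factors of $G$ by the $k$-rational structure of $A$, so that any solvable constraint at one archimedean place propagates to all places through the defining relations of the quaternion algebra and is thereby incompatible with $B=A$.
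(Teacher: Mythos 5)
The paper does not supply an explicit proof of Theorem~\ref{thm2}; it is presented as a reformulation ``suggested'' by the case analysis in Remarks~\ref{rem1} and \ref{rem2} (for $\PGL_2(\C)$: maximal proper real Zariski closed subgroups are the Borel, $\SO_3$, or $\PGL_2(\R)$, and the classification of type~I/II/III arithmetic Kleinian groups from \cite{MR-book} pins down when the Fuchsian case occurs). Your proposal instead pursues a more systematic route via the quaternion algebra, which is a reasonable direction, but the key step~(i) is left as a plan rather than a proof, and a few points in the plan need correction or completion.

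First, the subalgebra trichotomy as stated (``the only $k$-subalgebras of a quaternion algebra are $k$, quadratic subfields of $A$, and $A$ itself'') is incorrect in the split case $A\cong M_2(k)$: there are also the non-semisimple subalgebras (the upper-triangular $3$-dimensional Borel algebra and the dual numbers $k[\varepsilon]/(\varepsilon^2)$). Your conclusion $B=A$ still follows, because all of these have solvable unit group modulo center and so would force $\langle F\rangle$ to be virtually solvable, but the argument needs to account for these cases explicitly rather than appeal to a false classification. Second, the crucial identification of $H$ with (the image in $G$ of) $\mathrm{Res}_{l/\Q}(\mathbf{PGL}_1(A_l))(\R)$ is asserted, with the subfield $l$ ``determined by the Galois orbit of archimedean embeddings on which $\tilde F$ appears diagonally'' --- this is too vague to be a proof, and as written the subsequent ``semisimplicity follows from the $B=A$ trichotomy at the level of $l$ and $A_l$'' is circular, since $A_l$ is not defined until $H$ is identified. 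What is actually needed is: (a) define $l$ as the invariant trace field of $\langle F\rangle$, i.e.\ $l=\Q(\tr(\gamma^2):\gamma\in\langle F\rangle)$, and $A_l$ as the $l$-span of $\langle F\rangle^{(2)}$ in $A$, then show $A_l$ is a quaternion algebra over $l$ with $A_l\otimes_l k\cong A$ (this uses semisimplicity of $A$ and $B=A$); (b) show $\langle F\rangle^{(2)}$ is Zariski dense over $\R$ (equivalently over $\Qbar$) in $\mathrm{Res}_{l/\Q}(\mathbf{PGL}_1(A_l))$. Step~(b) is a genuine point: the Zariski closure over $\Qbar$ surjects onto every $\PGL_2$-factor, so by Goursat it is block-diagonal with respect to a Galois-stable partition of the embeddings $\sigma\colon l\hookrightarrow\Qbar$; a non-trivial block would force $\sigma(\tr\gamma)=\sigma'(\tr\gamma)$ for distinct $\sigma,\sigma'$ and all $\gamma\in\langle F\rangle^{(2)}$, contradicting that $l$ is generated by those traces. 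Third, the claim that $\Gamma'=\Gamma\cap H$ is an arithmetic lattice in $H$ is not automatic and should be justified (e.g.\ by noting $\Gamma\cap H$ contains $\langle F\rangle$, hence is Zariski dense in $H$, and is commensurable with the units of $\cO_k\cap A_l$, an order in $A_l$). Once these are filled in, your final step --- projecting away the compact $\SO_3$ factors coming from ramified real places of $A_l$ and applying Theorem~\ref{thm1} to $F\subset\Gamma'\subset H$ --- is fine, and the finiteness of possible shapes of $H$ within $G$ does allow the constants to be absorbed into a single $c_G$.
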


\begin{remark} \label{rem:bounded deg}
If the degree of the field $k$ is bounded, then the estimate in part (2) of the theorems improves to 
$$\hhat(F) > c\log(\covol(\Gamma_1)),$$
with $c = c(G) > 0$. 
For instance, the improved estimate holds for non-uniform arithmetic lattices in $G$. 
\end{remark}

As a corollary of the main result we have the following strengthening of the arithmetic Margulis lemma:

\begin{corollary}[Strong arithmetic Margulis lemma]\label{cor1}
Let $G$ be a simi-simple Lie group $\PGL_2(\R)^a\times\PGL_2(\C)^b$, $a+b \ge 1$, and let $X$ be the symmetric space of $G$. There exists a positive constant $\epsilon_G$ with the following property. Consider an arithmetic subgroup $\Gamma \subset G$ and a point $x \in X$. Let $\Gamma_1$ be a congruence arithmetic subgroup (e.g., a maximal lattice) containing $\Gamma$. Then, the subgroup generated by the set
$$F = \{\gamma\in\Gamma \mid d(x, \gamma x) \leq \epsilon_G\log(\covol(\Gamma_1))^{\frac12}\}$$
is not Zariski dense in $G$ considered as a real algebraic group. 
\end{corollary}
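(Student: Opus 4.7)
The plan is to argue by contradiction: assume $\langle F\rangle$ is Zariski dense in $G$ as a real algebraic group, and combine Theorem~\ref{thm1} with an upper bound on $\hhat(F)$ in terms of $d_{\max}(F):=\max_{\gamma\in F} d(x,\gamma x)$ to force $\epsilon_G$ to be bounded below.

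The first step is to establish, uniformly in the base point $x\in X$, the estimate
$$\hhat(F)\le \frac{C_G}{[k:\Q]}\,d_{\max}(F),$$
where $k$ is the field of definition of $\Gamma_1$. This is obtained by computing $h(\gamma)$ place by place using the arithmetic structure of $\Gamma\subset\Gamma_1$. Lifting $\Gamma_1$ to the units of an order in the defining quaternion algebra over $k$, the non-archimedean places contribute nothing because the elements are $v$-integral; each archimedean place at which the algebra is ramified contributes at most a bounded constant; and at the $a+b$ archimedean places corresponding to the noncompact factors of $G$, a Cartan-decomposition computation gives $\log\|\gamma\|_v\le \tfrac12 d_v(x_v,\gamma x_v)+C_{x_v}$, where $C_{x_v}$ depends only on $x_v$ and not on $\gamma$. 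Summing and normalizing yields $h(\gamma)\le \frac{C}{[k:\Q]}\,d(x,\gamma x)+O_x(1)$; applying this to $\gamma\in F^n$, dividing by $n$, and letting $n\to\infty$ erases the $O_x(1)$ and gives the displayed bound.

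The second step combines this with Theorem~\ref{thm1}. Under the Zariski-density assumption we have both $\hhat(F)>c_G$ and $\hhat(F)>c_G\log(\covol(\Gamma_1))/[k:\Q]^2$. Substituting the upper bound from Step 1 together with the hypothesis $d_{\max}(F)\le \epsilon_G\log(\covol(\Gamma_1))^{1/2}$ turns these into
$$\frac{[k:\Q]}{\log(\covol(\Gamma_1))^{1/2}}<\frac{C_G\epsilon_G}{c_G}\quad\text{and}\quad \frac{\log(\covol(\Gamma_1))^{1/2}}{[k:\Q]}<\frac{C_G\epsilon_G}{c_G}.$$
Multiplying the two gives $1<(C_G\epsilon_G/c_G)^2$, so any choice of $\epsilon_G<c_G/C_G$ produces a contradiction, forcing $\langle F\rangle$ not to be Zariski dense.

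The main obstacle is the uniformity in $x$ of the Step~1 bound. The Cartan comparison between $\log\|\gamma\|_v$ and $d_v(x_v,\gamma x_v)$ fails pointwise: for a unipotent $\gamma$ with $x_v$ deep in the corresponding horoball, the displacement can be arbitrarily small while $\|\gamma\|_v$ is unbounded. The rescue is that the additive error $C_{x_v}$ depends only on $x_v$ and not on $\gamma$, so it disappears under the normalization $\hhat(F)=\lim_n h(F^n)/n$. A secondary technical check is that the aggregate contribution from the (possibly many) compact archimedean and non-archimedean places is $O([k:\Q])$, which is absorbed by the $1/[k:\Q]$ prefactor in the definition of $h$.
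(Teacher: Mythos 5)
Your proof is correct and takes a genuinely different route from the paper's. The paper first invokes a variant of Breuillard's Corollary~1.7, derived from Theorem~\ref{thm1}, to extract a single element $\gamma\in F^{N_1}$ with an eigenvalue $\lambda$ satisfying $[k:\Q]\,h(\lambda)\geq c_1\log(\covol(\Gamma_1))^{1/2}$ (via $\max(t^2,1)\geq t$), then uses the identity $\ell_v(\gamma)=2\log^+|\lambda_v|+2\log^+|1/\lambda_v|$ and Cauchy--Schwarz to bound the translation length $\ell(\gamma)$ from below, and derives a contradiction from $\ell(\gamma)\leq d(x,\gamma x)\leq N_1\epsilon_G\log(\covol(\Gamma_1))^{1/2}$; since $\ell(\gamma)=\inf_y d(y,\gamma y)$, the base point drops out for free. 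You instead prove an upper bound $\hhat(F)\leq C_G\,d_{\max}(F)/[k:\Q]$ by estimating $h(\gamma)$ place by place and using the $\lim_n h(F^n)/n$ normalization to discard the base-point-dependent additive error, and then feed both consequences of the $\max$ in Theorem~\ref{thm1}(2) into this bound; multiplying your two displayed inequalities plays exactly the role of the paper's $\max(t^2,1)\geq t$ step. Your route is more self-contained (it bypasses the unstated Corollary~1.7 variant and works with $\hhat(F)$ throughout), while the paper's route passes through translation lengths and so avoids the base-point-uniformity issue altogether. One small imprecision in your Step~1: when $\Gamma_1=\Gamma_S$ is a maximal lattice, its elements need not be $v$-integral at $v\in S$ (they stabilize an edge rather than a vertex of the local Bruhat--Tits tree), so the finite places do not literally contribute nothing; still, the local image is compact and $\log^+\|\gamma\|_v$ is bounded independently of $\gamma$ there, so the total non-archimedean contribution is a constant depending only on $\Gamma$, and it vanishes in the normalized-height limit by the same mechanism that kills the archimedean $O_x(1)$.
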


In Section~\ref{sec:appls} we present two applications of Corollary~\ref{cor1} that give short new solutions to well-known problems.

\subsection*{Further remarks and questions}

Even though the inequality for $\hhat(F)$ in Theorem~\ref{thm1} is an improvement of Breuillard's original gap, it is not clear if this gap is optimal, and it might well be possible that the following estimate holds:  

\begin{question}\label{optimal} Can the inequality in Theorem~\ref{thm1}(2) be improved to 
$$\displaystyle\hhat(F) > c_G\frac{\log(\covol(\Gamma_1))}{[k:\Q]}\text{?}$$
\end{question}

If the improved bound holds, then Corollary \ref{cor1} would apply to the sets of the form $$F = \{\gamma\in\Gamma \mid d(x, \gamma x) \leq \epsilon_G\log(\covol(\Gamma_1)\},$$ 
which is optimal up to the value of the multiplicative constant $\epsilon_G$
due to geometry of non-compact symmetric spaces  (because the volume of balls grows at most exponentially). This observation also shows that if we have a uniform bound on the degree $[k:\Q]$, our height gap is optimal.

\subsubsection*{More general Lie groups} 
We expect that the proof of Theorem~\ref{thm1} can be generalized to other semi-simple real (and $S$-adic) Lie groups. The covolume is controlled  via Prasad's volume formula but there are many more possibilities for the groups and parahorics involved which have to be treated consistently. Generalizing the result to this setting is important, as it  will give an improvement of Breuillard's gap valid for any finite set of matrices in $\GL_n(\bar{\Q})$. 


\medskip

\noindent
\textbf{Acknowledgments.} 
We would like to thank Mikolaj Fraczyk who initially took part in this project and contributed significatly to our work. We also thank M.~Fraczyk, B.~Lowe and J.~Raimbault for various discussions of potential further applications of these results. The work of M.B. is partially supported by the FAPERJ grant E-26/204.250/2024,  the Institut Henri Poincar\'e (UAR 839 CNRS-Sorbonne Universit\'e), and LabEx CARMIN (ANR-10-LABX-59-01).

\section{Arithmetic preliminaries}



Borel \cite{Bor81} described in detail the maximal arithmetic lattices in $G = \PGL_2(\R)^a\times\PGL_2(\C)^b$ and gave a formula for their covolume. We will briefly recall these results following Borel's paper and the exposition of his work in \cite{MR-book}.

Arithmetic lattices in $G$ are all obtained in the following way. Let $k$ be a number field of degree $d=r_1+2r_2$ with $r_2 = b$ complex places and $r_1 \geq a$ real places. Let $A$ be a quaternion algebra over $k$. Assume that for all except fixed $a$ real places $v\in V_\infty$ of $k$ the algebra $A$ ramifies over $k_v (\cong \R)$, i.e., $A(k_v)$ is isomorphic to the Hamiltonian quaternion algebra, while $A(k_{v_i})\cong {\mathrm M}_2(\R)$ for the remaining real places $v_1$, \ldots, $v_a$. Let $\cO=\cO_k$ be the ring of integers in $k$ and let $\mathfrak{D}$ be an order in $A(k)$, i.e., $\mathfrak{D}$ is a finitely generated $\cO$-submodule of $A(k)$ which is also a subring that generates $A(k)$ over $k$. Furthermore, assume that $\mathfrak{D}$ is a maximal order in $A$. Let $\mathfrak{D}^*$ be the group of invertible elements of $\mathfrak{D}$. Now, $\mathfrak{D}^*$ is discrete in $\prod_{v\in V_{\infty}}
(A(k_v))^*\cong \mathrm{U}(2)^{r_1-a}\times \GL_2(\R)^a\times\GL_2(\C)^b$, and its projection 
$$\Gamma = P\rho(\mathfrak{D}^*) \subset \PGL_2(\R)^a\times\PGL_2(\C)^b$$ is a discrete subgroup of $G$. Any subgroup of $G$ which is commensurable with such $\Gamma$ is called an \emph{arithmetic subgroup}. It is always a \emph{lattice}, i.e., a discrete subgroup of finite covolume in $G$. 
All the arithmetic lattices of $G$ are obtained in this way and two arithmetic
lattices of $G$ are commensurable if and only if they come, in the process as above, from the same algebra $A$. So for each such algebra $A$ we can associate a well defined commensurability class of arithmetic lattices in $G$ which is denoted by $\mathcal{C}(A)$.

An arithmetic subgroup $\Gamma = P\rho(\mathfrak{D}^*)$ and its finite index subgroups are said to be \emph{derived from a quaternion algebra}. These lattices have a property that they are contained in $k$-points of the associated algebraic $k$-group. A subgroup $\Lambda^{(2)}$ generated by the squares of the elements of an arithmetic lattice $\Lambda$ is always derived from a quaternion algebra (cf. \cite[Corollary~8.3.5]{MR-book}). 

Now, a quaternion algebra $A$ over $k$ is completely determined by the finite set of valuations $\ram(A)$, a subset of the set of all valuations $V$ of $k$ which consists of those $v$ for which $A(k_v)$ \emph{ramifies} (i.e., $A(k_v)$ is a division algebra), while for $v\in V\setminus \ram(A)$, $A(k_v)$ \emph{splits} (i.e., isomorphic to ${\mathrm M}_2(k_v)$). The subset $\ram(A)$ must be of even size; in our case it is formed by all except $a$ real valuations and a subset $\ram_f(A)$, possibly empty, of non-archimedean valuations. The set $\ram_f(A)$ can be identified with a subset of the prime ideals of $\cO$. Let $\Delta (A)= \prod_{\mathcal{P}\in \ram_f(A)}\mathcal{P}$, the product of all prime ideals at which $A$ ramifies.

Borel showed that the commensurability class $\mathcal{C}(A)$ has infinitely many non-conjugate maximal elements whose covolumes form a discrete subset of $\R$. The minimal covolume in the class $\mathcal{C}(A)$ is:
\begin{equation}			\label{formula:maxcovolume}
	\covol(\Gamma_\mathfrak{D}) = \frac{2
		{\Delta_k}^{3/2}\zeta_k(2)\prod_{\mathcal{P}|\Delta(A)}(N(\mathcal{P})-1)}
		{2^{2r_1+3r_2-2a}\pi^{2r_1+2r_2-a}[\Gamma_\mathfrak{D}:\Gamma^1_\mathfrak{D}]},
\end{equation}
see \cite[Theorem~7.3 and Section~8]{Bor81} (see also \cite[Corollary 11.6.6, p. 361 and (11.6), p. 333]{MR-book} where the formula is given for $G = \PGL_2(\C)$).

Here $\Delta_k$ is the absolute value of the discriminant of $k$, $\zeta _k$ is the
Dedekind zeta function of $k$ and $N(\mathcal{P})$ denotes the norm of the ideal $\mathcal{P}$, i.e.
the order of the quotient field $\cO /\mathcal{P}$. The index of a principal arithmetic subgroup $\Gamma^1_\mathfrak{D}$ in the associated maximal arithmetic subgroup $\Gamma_\mathfrak{D}$ can be further estimated in terms of number theoretic invariants but we will only use the fact that it is greater or equal than $1$. 

Let $S$ be a finite subset of $V_f$ disjoint from $\ram_f(A)$. These subsets parametrize the maximal arithmetic subgroups $\Gamma_S$ in the commensurability class $\mathcal{C}(A)$: for $v\not\in S$ the group $\Gamma_S$ fixes a vertex of the Bruhat--Tits tree of $\SL_2(k_v)$ and for $v \in S$ the group $\Gamma_S$ stabilizes an edge of the Bruhat--Tits tree. It is shown that there exists a maximal arithmetic subgroup $\Gamma_S$ with these properties and that all maximal subgroups in $\mathcal{C}(A)$ are of this form (see \cite[Section~2]{Bor81}). The covolume of a maximal arithmetic subgroup $\Gamma_S$ is given by \eqref{formula:maxcovolume} together with the equation for the generalized index 
\begin{equation}\label{formula:covolume}
[\Gamma_\mathfrak{D}:\Gamma_S] = 2^{-m} \prod_{\mathcal{P}\in S} (N(\mathcal{P})+1), 
\end{equation}
where $m$ is an integer with $0\leq m \leq |S|$.



\section{Generic elements}\label{sec:gen el}
Let $H_{a,b}$ denote the product of $a$ copies of the hyperbolic plane and $b$ copies of the hyperbolic $3$-space.
The group $G = \PGL_2(\R)^a\times\PGL_2(\C)^b$ is the group of all isometries of $H_{a,b}$ which preserve each factor and the orientation of the three-dimensional factors (recall that $\PGL_2(\R)$ has two connected components one of which is isomorphic to $\PSL_2(\R)$, while $\PGL_2(\C)$ is isomorphic to $\PSL_2(\C)$). Given a subgroup $\Gamma \subset G$, the group $\Gamma^{(2)}$ generated by squares consists of orientation preserving isometries. 

Let $\Gamma$ be an arithmetic subgroups of $G$ defined over a number field $k$ and let $W\in \Gamma^{(2)}$. We denote by $\alpha = \alpha(W)$ an eigenvalue of the pre-image of $W$ in $\SL_2(k)$. Then $\alpha$ is well-defined up to multiplication by $\pm 1$, and we will refer to it as an eigenvalue of $W$.

\begin{definition}
Let $\Gamma \subset \PGL_2(\R)^a\times\PGL_2(\C)^b$ be an arithmetic subgroup defined over a field $k$ and let $W = (W_1, \ldots, W_{a+b})\in \Gamma^{(2)}$, $\alpha = \alpha(W)$. We call the element $W$ \emph{generic} if its components $W_1,\ldots,W_a$ are hyperbolic, components $W_{a+1},\ldots,W_{a+b}$ are loxodromic (and not hyperbolic), and the field $\Q(\alpha + 1/\alpha) = k$.
\end{definition}

This definition is related to the general notion of generic elements in semi-simple algebraic groups studied, in particular, by Prasad and Rapinchuk (see \cite[Section~9.4]{PR1} for the references and related discussion). In this paper we restrict our attention to the $\PGL_2$-case where the definition given above is sufficient. 

We recall the exceptional isomorphism 
$$\Psi: \PGL_2(\C) \to \SO^+_{3,1},$$
which is described in detail, for instance, in \cite[Section~1.3]{EGM-book} and \cite[Section~10.2]{MR-book}. 

The map $\Psi$ induces an embedding 
$$\Phi: \Gamma^{(2)} \to (\SO^+_{2,1})^a\times(\SO^+_{3,1})^b \subset \GL_{3a+4b}(\R).$$

The map $\Phi$ can be used to distinguish generic elements of $\Gamma$. To this end we notice that an element $M\in \SO^+_{3,1}$ is loxodromic if and only if the matrix $M$ has no eigenvalue equal to $1$. Similarly, an element of $\SO^+_{2,1}$ is hyperbolic if and only if its eigenvalue corresponding to the negative eigenvector is not equal to $1$ (see \cite[Chapter~2, Propositions~1.13--1.16]{EGM-book}).

\begin{lemma}[An algebraic criterion for genericity]\label{lemma-gen-cr}
There exists a proper real subvariety $X$ of $G = \PGL_2(\R)^a\times\PGL_2(\C)^b$ such that for any arithmetic subgroup $\Gamma \subset G$ all elements of $\Gamma^{(2)}$ which are not in $X$ are generic. 
\end{lemma}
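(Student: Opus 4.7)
The plan is to build $X$ as a union $X = X_{\mathrm{geom}} \cup X_{\mathrm{tr}}$ of two real subvarieties of $G$, one responsible for failures of the geometric conditions in the definition of genericity and the other for failures of the trace-field condition $\Q(\alpha + \alpha^{-1}) = k$. The geometric piece exploits the map $\Phi$ together with the eigenvalue criteria recorded just before the lemma; the trace-field piece is a Galois-orbit argument combined with the arithmetic input that at each hidden real place of $k$ (a real place where the defining quaternion algebra $A$ ramifies), the element $W$ maps into the compact group $\HH^1/\{\pm 1\}$ and consequently $|\sigma_v(\tr W)| \leq 2$.

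Writing $\Phi(W)_i$ for the $i$-th component of $\Phi(W) \in (\SO^+_{2,1})^a \times (\SO^+_{3,1})^b$, I define $X_{\mathrm{geom}}$ as the locus where either $\Phi(W)_i \in \SO^+_{2,1}$ ($i \leq a$) fails the hyperbolicity criterion (its eigenvector of eigenvalue $1$ is not space-like for the preserved bilinear form) or $\Phi(W)_{a+j} \in \SO^+_{3,1}$ ($j \leq b$) has an eigenvalue equal to $1$. The complex-factor condition is the single polynomial relation $\det(\Phi(W)_{a+j} - I) = 0$; the real-factor condition is algebraic via vanishing of specific minors of $(\Phi(W)_i - I)$ combined with the matrix of the preserved form. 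Hence $X_{\mathrm{geom}}$ is a real algebraic subvariety, and it is proper: one can exhibit explicit hyperbolic / loxodromic-not-hyperbolic components.

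Now suppose $W \in \Gamma^{(2)} \setminus X_{\mathrm{geom}}$ and $\Q(\tr W) \subsetneq k$, i.e.\ $\tr W$ lies in a proper subfield $k_0 \subsetneq k$. Let $H = \mathrm{Gal}(\Qbar/k_0)$ act on the set $\Sigma(k)$ of embeddings $k \hookrightarrow \Qbar$ by post-composition. Every $H$-orbit on $\Sigma(k)$ has size $\geq 2$: if $\sigma$ were $H$-fixed, then $\sigma(k) \subseteq \Qbar^H = k_0$, contradicting $[k:\Q] > [k_0:\Q]$. Partition $\Sigma(k)$ into the $a+2b$ visible embeddings (those from the $a$ real and $b$ complex places of $k$ where $A$ splits) and the $r_1 - a$ hidden embeddings (the real places where $A$ ramifies, at which $|\sigma_v(\tr W)| \leq 2$). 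Since $a + b \geq 1$ visible embeddings exist, and some $H$-orbit $O$ meets them. If $O$ is entirely visible, then two distinct visible embeddings give the same value of $\tr W$ --- a polynomial identity on $G$ placing $W$ into $X_{\mathrm{tr}} := \bigcup_{v \neq v'}\{W : \sigma_v(\tr W) = \sigma_{v'}(\tr W)\}$ (union over pairs of visible places, with the expected pairings involving complex conjugates also included). If instead $O$ meets the hidden embeddings, then some visible value of $\tr W$ equals a hidden one and hence lies in $[-2,2]$; this forces either a real component of $W$ to be non-hyperbolic or a complex component to have real trace (hyperbolic, not loxodromic), contradicting $W \notin X_{\mathrm{geom}}$. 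Thus outside $X := X_{\mathrm{geom}} \cup X_{\mathrm{tr}}$, every element of $\Gamma^{(2)}$ is generic.

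The main obstacle is making the hyperbolicity condition for real factors genuinely Zariski-closed rather than the naive semi-algebraic inequality $|\tr W_i| > 2$. I would handle this via the ``type of the $1$-eigenvector'' encoding above, using the preserved bilinear form and polynomial minors of $(\Phi(W)_i - I)$, together with ruling out parabolic and order-two cases via the discriminant of the characteristic polynomial of $\Phi(W)_i$. Once $X_{\mathrm{geom}}$ is cast algebraically, checking that $X$ is a proper real subvariety reduces to a dimension count: it suffices to exhibit a single element of $G$ with generic hyperbolic / loxodromic-not-hyperbolic components and pairwise distinct visible traces.
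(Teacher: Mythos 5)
Your proposal follows the same overall strategy as the paper's proof: split $X$ into a ``geometric'' subvariety controlling the hyperbolicity/loxodromicity of the factors via the embeddings into $\SO^+_{2,1}$ and $\SO^+_{3,1}$, and a ``trace'' subvariety controlling the trace-field condition $\Q(\alpha+\alpha^{-1}) = k$ by forbidding coincidences of visible traces. The geometric part is essentially identical, and you correctly identify as the main technical irritant the fact that hyperbolicity at real factors is semi-algebraic rather than Zariski-closed; your minors-of-$(\Phi(W)_i - I)$ workaround is at the same level of detail as the paper's phrasing.

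Where your argument genuinely departs from, and in fact improves on, the paper is the trace-field step. The paper concludes, from the pairwise distinctness of the $a + 2b$ visible embeddings of $\tr W$, that $[\Q(\tr W):\Q] = a + 2b$ and hence $\Q(\tr W) = k$. But $[k:\Q] = r_1 + 2b$ with $r_1 \geq a$, so the two degrees agree only when $r_1 = a$, i.e., when the quaternion algebra ramifies at no real place. When $r_1 > a$ (and a fortiori when $a + b = 1$, where the paper's $X_3$ is empty), the visible count $a + 2b$ is strictly less than $[k:\Q]$ and the paper's argument does not close. Your Galois-orbit argument, combined with the observation that at each ramified (``hidden'') real place the image of $\Gamma^{(2)}$ lands in a compact quotient of $\HH^1$ and so $|\sigma_v(\tr W)| \leq 2$, is precisely what is needed: every orbit of $\mathrm{Gal}(\Qbar/\Q(\tr W))$ on $\Sigma(k)$ has size at least two, so the orbit through a visible embedding must also contain either a second visible one (excluded by $X_{\mathrm{tr}}$) or a hidden one (excluded by $X_{\mathrm{geom}}$, since it would force a visible trace into $[-2,2]$). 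This supplies the missing step and shows $\Q(\tr W) = k$ in full generality, rather than only in the unramified-at-infinity case. As a small point, your $X_{\mathrm{tr}}$ ``with the expected pairings involving complex conjugates also included'' is the right fix; the paper's list $f_{i,j}$, $g_j$ omits the pairs $\tr(W_{a+i}) = \tr(\overline{W_{a+j}})$ for $i \neq j$.
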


\begin{proof}
The maps $\Psi$ and $\Phi$ considered above are $\R$-morphisms. We can define a subvariety $X_1$ as the Zariski closure of the pre-image in $G$ of the set of elements whose $\SO^+_{3,1}$-component has an eigenvalue equal to $1$. Similarly, define $X_2$ as the closure of the pre-image of the elements whose $\SO^+_{2,1}$-component has an eigenvalue corresponding to the negative eigenvector equal to $1$. 

It remains to exclude the elements for which the field $k_0 = \Q(\alpha + 1/\alpha)$ is a proper subfield of $k$. For every $i, j = 1,2, \dots, a +b$ such that $i \neq j$, let  $f_{i,j} := \tr(W_i) - \tr(W_j),$ this is an algebraic function different from zero in $G$, and similarly, for $j =1, \dots, b$, let $ g_j := \tr(W_{a +j}) - \tr(\overline{W_{a+j}})$. Let $X_3$ be the union of the zeros of all such algebraic functions $f_{i,j}, g_j$. Observe that if $W$ is in the complement of $X_3$, then $\alpha + 1/\alpha = \tr(W)$ has at least $n = a + 2b$ different embeddings in $\C$, and therefore $[k_0: \Q] = n$ and as $k_0 \subset k$, we must have $k_0 = k$.  In conclusion, we can take $X = X_1 \cup X_2 \cup X_3$.


\end{proof}

\begin{lemma}\label{lemma2}
Let $\Gamma \subset \PGL_2(\R)^a\times\PGL_2(\C)^b$ be an arithmetic subgroup defined over a field $k$ and $W \in \Gamma^{(2)}$ a generic element with an eigenvalue $\alpha$. Then the quaternion algebra $A(k)$ associated to $\Gamma$ splits over the field $\ell = \Q(\alpha)$. 
\end{lemma}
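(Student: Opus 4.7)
The plan is to pull $W$ back into the quaternion algebra $A(k)$ and study the two-dimensional commutative $k$-subalgebra generated by its lift. Because $\Gamma^{(2)}$ is derived from $A$, the element $W$ lifts to $\tilde W \in \mathfrak{D}^* \subset A(k)^*$, where $\tilde W$ has reduced norm $1$ and reduced trace $\alpha + \alpha^{-1}$. By genericity this trace lies in and generates $k$ over $\Q$, so $\tilde W$ satisfies
$$\tilde W^2 - (\alpha + \alpha^{-1})\,\tilde W + 1 = 0$$
over $k$. Genericity also ensures $\tilde W$ is non-central, hence $k[\tilde W] \subset A(k)$ is a two-dimensional commutative $k$-subalgebra isomorphic to $k[x]/\bigl(x^2 - (\alpha + \alpha^{-1})x + 1\bigr)$, and one has $k = \Q(\alpha + \alpha^{-1}) \subset \Q(\alpha) = \ell$.

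I would then split into two cases depending on whether $\alpha \in k$. In the case $\alpha \notin k$, the quadratic polynomial above is irreducible over $k$, so $k[\tilde W]$ is a field isomorphic to $k(\alpha) = \ell$, a quadratic extension of $k$. This exhibits $\ell$ as a maximal subfield of the quaternion algebra $A(k)$, and by the classical criterion (a quaternion algebra splits over any quadratic extension that embeds into it as a maximal subfield) we obtain $A \otimes_k \ell \cong M_2(\ell)$.

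In the case $\alpha \in k$, the inclusions $k \subset \ell = \Q(\alpha) \subset k$ force $\ell = k$, so it suffices to prove that $A$ is already split over $k$. Here genericity rules out $\alpha = \pm 1$ (each such value would force some archimedean component of $W$ to fail to be hyperbolic or loxodromic), so $\alpha \neq \alpha^{-1}$ and the polynomial factors as $(x - \alpha)(x - \alpha^{-1})$ with two distinct roots in $k$. Then $k[\tilde W] \cong k \times k$ contains a nontrivial idempotent, producing a zero divisor in $A(k)$; therefore $A$ is not a division algebra and $A \cong M_2(k) = M_2(\ell)$, as desired.

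The main subtlety I expect is the passage between the $\PGL_2$-element $W$ and its quaternion lift $\tilde W$: the lift is only defined up to the central factor $\pm 1$, but this ambiguity disappears in the subalgebra $k[\tilde W]$ and in the field $\Q(\alpha) = \Q(-\alpha)$, so it causes no essential difficulty. Once this is set up, both cases reduce to standard facts about splitting of quaternion algebras via \'etale subalgebras.
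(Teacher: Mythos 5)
Your proof is correct, and it takes a genuinely different route to the key intermediate step. The paper first invokes \cite[Lemma~12.2.1]{MR-book} to embed $\ell$ into $A(k)$, and then, when $A(k)$ is a division algebra, asserts the existence of a ramified real archimedean place $v$, deduces $|\alpha_v + 1/\alpha_v| < 2$ there, and hence that $\ell/k$ is a proper quadratic extension, concluding via \cite[Theorem~12.2.3]{MR-book}. You instead organize the argument around the algebraic dichotomy on whether $\alpha \in k$: if $\alpha \notin k$ then $k[\tilde W] \cong \ell$ is a quadratic maximal subfield and the classical splitting criterion applies; if $\alpha \in k$ then $k[\tilde W]$ contains a nontrivial idempotent, so $A(k) \cong \mathrm{M}_2(k)$ is already split. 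Your route is actually a bit more robust, since the paper's claim that a division $A(k)$ must ramify at some archimedean place is not always true in the allowed setup: when $r_1 = a$ (for instance $G = \PGL_2(\R)$, $k = \Q$, with $A$ ramified only at two finite primes) $A$ is a division algebra with no archimedean ramification. The intermediate conclusion the paper wants ($\ell/k$ quadratic whenever $A$ is a division algebra) still holds, but precisely for the zero-divisor reason you give, not for the archimedean reason the paper cites. Both proofs terminate with the same standard fact that a quaternion algebra splits over any quadratic extension that embeds into it.
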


\begin{proof}
By \cite[Lemma 12.2.1]{MR-book}, the field $\ell$ embeds isomorphically as a subfield of $A(k)$. If $A(k)$ is a division algebra, then there exists a place $v \in V_\infty(k)$ for which $A(k_v)$ is ramified. It follows that $|\alpha_v + 1/\alpha_v| < 2$ and thus the extension of $k_v$ to $\ell$ is complex. So in this case $\ell$ is a quadratic extension of $k$ and we apply \cite[Theorems 12.2.3]{MR-book} to conclude that $A(k)$ splits over $\ell$. In the remaining case $A(k)$ already splits over $k$ hence it splits over $\ell \supseteq k$.
\end{proof}

\begin{example}\label{example-PSL(2,C)}
Let $G = \PGL_2(\C)$ ($=\PSL_2(\C)$). In this case the assumption that $W = W_1 \in \Gamma^{(2)}$ is loxodromic already implies that for its eigenvalue $\alpha$ the field $\Q(\alpha + 1/\alpha) = k$. Indeed, assume that $\Q(\alpha + 1/\alpha)$ is a proper subfield of $k$. Since $k$ has exactly one complex place, all its proper subfields are totally real, but by the definition of loxodromic elements we have that $\alpha$ is not real and not a unit. It follows that $\Q(\alpha + 1/\alpha) = k$ and $[\Q(\alpha):k] = 2$ (cf. \cite[Chapter~12]{MR-book}). 

Notice that here the subvariety $X$ of non-generic elements has a particularly simple form given by 
$ X = \{ M \in G \mid \mathrm{Det}(\Phi(M) - \mathrm{Id}) = 0\}$.
\end{example}



\section{Proof of the main theorem for \texorpdfstring{$G = \PGL_2(\C)$}{G = PGL(2,C)}.}

In this section we prove Theorem~\ref{thm1} for the arithmetic Kleinian groups. This argument already contains all the main ideas but allows to avoid some technicalities. 

\subsection{Generic elements}\label{sec:step1}
The first main ingredient of the proof is the following variant of Lemma~3.1 from \cite{CHL21}:

\begin{lemma}\label{lemma-word} 
Given a non-trivial word $w \in \mathrm{F}_2$ such that $w(A,B)\in \Gamma^{(2)}$ for any $A$, $B\in \Gamma$, there exists $n_w$ (only depending on $w$ and $G$) such that for any subset $F \subset \Gamma$ generating a Zariski dense over $\R$ subgroup of $G$, the element $w(A,B)$ is generic for some $A, B \in F^{n_w}$.
\end{lemma}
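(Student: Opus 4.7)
The plan is to reduce the question to an escape-from-subvariety problem in the style of \cite[Lemma~3.1]{CHL21}. Define the real algebraic morphism $\phi_w : G \times G \to G$ by $(A, B) \mapsto w(A, B)$, let $X \subset G$ be the non-generic locus from Lemma~\ref{lemma-gen-cr}, and set $Y = \phi_w^{-1}(X) \subset G \times G$. This is again a real algebraic subvariety, cut out by polynomial equations whose degrees are bounded in terms of the word length of $w$ and the degree of $X$. If I can show both that $Y$ is a \emph{proper} subvariety of $G \times G$ and that $F^{n_w} \times F^{n_w}$ escapes $Y$ for some bounded $n_w = n_w(w, G)$, then any escape pair $(A, B)$ will satisfy $w(A, B) \notin X$, which is generic by Lemma~\ref{lemma-gen-cr}.

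For properness of $Y$, it suffices (since $X \subsetneq G$) to exhibit a single pair $(A_0, B_0) \in G \times G$ with $w(A_0, B_0) \notin X$. For $G = \PGL_2(\C)$, Example~\ref{example-PSL(2,C)} describes $X$ explicitly as the locus $\{\det(\Phi(W) - \mathrm{Id}) = 0\}$, whose complement consists of the loxodromic elements. Since $\phi_w$ is the word map of a nontrivial $w$ on a connected semisimple group, it is dominant (by Borel's theorem on word maps, or, in this rank-one setting, by a direct trace computation on a pair of free generators in $\PGL_2(\C)$). Hence its image contains loxodromic elements, proving $Y \subsetneq G \times G$.

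For the escape step, I would apply the uniform exponential growth theorem of Eskin--Mozes--Oh \cite{EMO05} in the same manner as \cite[Lemma~3.1]{CHL21}: it yields a constant $n_w$, depending only on the bounded degree of $Y$ (hence on $w$) and on $G$, such that whenever $\langle F \rangle$ is Zariski dense in $G$ over $\R$, the product $F^{n_w} \times F^{n_w}$ is not contained in the proper subvariety $Y$. Any pair $(A, B) \in F^{n_w} \times F^{n_w}$ outside $Y$ gives a generic element $w(A, B)$.

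The main obstacle is the uniformity of $n_w$ in $F$: the bound must depend only on $w$ and $G$. This is precisely what EMO provides, via the lower bound $|F^n| \geq C^n$ with $C = C(G) > 1$ for every Zariski dense $F$; combined with the fact that the proper subvariety $Y$ has bounded degree, one forces $F^n \times F^n$ to escape $Y$ in bounded time. The present statement differs from \cite[Lemma~3.1]{CHL21} only in that we escape the slightly larger subvariety $Y = \phi_w^{-1}(X)$ rather than the single vanishing locus $\{w = 1\}$, so the same argument carries through with no new input.
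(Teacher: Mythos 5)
Your proof is correct and follows the same escape-from-subvariety route as the paper: set $Y = \phi_w^{-1}(X) \subset G \times G$, observe it is a proper real algebraic subvariety of degree controlled by $|w|$ and $\deg X$, and apply the uniform escape bound from \cite{EMO05} to conclude $F^{n_w}\times F^{n_w}\not\subset Y$ with $n_w$ independent of $F$. You are in fact slightly more careful than the paper, which simply asserts that $Y\cap G'$ is proper: your justification via dominance of the word map (or, more elementarily, via a Schottky pair of free generators in $\PGL_2(\C)$ on which any non-trivial word takes a loxodromic value) fills in that step.
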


\begin{proof}
As in \cite{CHL21}, the proof relies on \cite[Proposition~3.2]{EMO05} or \cite[Lemma~4.2]{Breu11}. We consider $G$ as a real algebraic subgroup of $\GL_4(\R)$ defined by the map $\Phi$ from Section~\ref{sec:gen el}, and think of $G\times G$ as a subgroup of $\GL_8(\R)$ embedded diagonally. Let $G'$ be the Zariski closure of $\Gamma\times \Gamma$ in $\GL_8(\R)$. Then for the subset $X$ from Lemma~\ref{lemma-gen-cr}, we define
\begin{align*}
Y :=& \{ (A,B) \in G \times G:\ w(A,B) \in X \} \\
   =& \{ (A,B) \in G \times G:\ \mathrm{Det}(w(A,B) - \mathrm{Id}) = 0 \}.
\end{align*}
It follows that the set $Y\cap G'$ is a proper Zariski closed over $\R$ subset of $G'$.

Now by \cite[Proposition~3.2]{EMO05}, there exists $n_w \geq 1$ depending only on $w$ such that  
$Z = \{ (A,B)\in \Gamma\times \Gamma :\  A, B\in F^{n_w}\} \setminus Y$ is non-empty. By Lemma~\ref{lemma-gen-cr}, for $(A, B) \in Z \subset F^{n_w}\times F^{n_w}$ the element $w(A,B)$ is generic.
\end{proof}

\subsection{Almost laws}\label{sec:step2}
The next key step is to use almost laws. Recall that for a group $G$, a \emph{law} is a non-trivial element $w$ in the free group $\mathrm{F}_n$ such that the image of the associated word map $w_G: \prod_n G \to G$ is the identity $1_G$. Given $\epsilon  > 0$ and a metric $\rho$ on $G$, an \emph{$\epsilon$-almost law} is a non-trivial element $w \in \mathrm{F}_n$ such that the image of $G$ lies in an $\epsilon$-neighborhood of $1_G$. A result of A.~Thom \cite{Thom13}, which has also been attributed to E.~Lindenstrauss, shows that for any compact Lie group $G$ and any $\epsilon  > 0$ there exists an \emph{$\epsilon$-almost law} $w \in \mathrm{F}_2$ on $G$. We refer to \cite[Section~2.2]{CHL21} for some more details about laws and almost laws. 

Let $w_0$ be an almost law on $\mathrm{O}_4$, a maximal compact subgroup of $\GL_4(\R)$. For the later purposes we will modify $w_0$ by composing it with a double commutator. More precisely, let 
$$w(A,B) := w_0([A^2, [B^2, A^2]], A^{-1}[A^2, [B^2, A^2]]A).$$
As $[A^2, [B^2, A^2]]$ and its conjugate generate a free subgroup of $\mathrm{F}_2$, the word $w(A,B)$ is not trivial. Moreover, by definition its values on $\Gamma$ are in $\Gamma^{(2)}$. By Lemma~\ref{lemma-word}, there exist $n_w \geq 1$ and $A_0$, $B_0\in F^{n_w}$ such that $W = w(A_0, B_0)$ is a generic element. 

Let $\alpha$ be an eigenvalue of $W$ whose absolute value is bigger than $1$. It is defined up to the multiplication by $\pm1$ and we will choose the sign so that $\mathrm{Re}(\alpha) \ge 0$. Denote by $\ell = \Q(\alpha)$ the extension of $\Q$ generated by $\alpha$. The field $\ell$ is a quadratic extension of $k$, the field of definition of $\Gamma$, and the quaternion algebra $A(k)$ splits over $\ell$ (cf. Example~\ref{example-PSL(2,C)}). 

\subsection{Discriminant}\label{sec:step3}
Let $d = d_\ell = [\ell:\Q] = 2d_k$. Notice that since $k$ has at least one complex place, the degree $d_\ell \geq 4$. Consider the discriminant of the field $\ell$:
\begin{align*}
    \Delta_\ell &= \prod_{1\leq i < j \leq d} |\alpha_i - \alpha_j|,
\end{align*}
where $\alpha_1 = \alpha$ and $\alpha_i$ are its Galois conjugates. We denote by $\alpha_1 = \alpha$ and $\alpha_2 = 1/\alpha$ the eigenvalues of $W$, by $\alpha_3 = \bar{\alpha}$ and $\alpha_4 = 1/\bar{\alpha}$ the eigenvalues of $\overline{W}$, and by $\alpha_5, \ldots, \alpha_d$ the other conjugates. Because $w$ is an almost law we have that $\alpha_5, \ldots, \alpha_d$ are very close to $1$, say, $|\alpha_i - 1|\leq \epsilon <1$, $i = 5, \ldots, d$. 

Now 
\begin{align*}
\log \Delta_\ell = \sum_{1\leq i < j \leq d}\log |\alpha_i - \alpha_j|
= S_1 + S_2 + S_r,
\end{align*}
where we denote by $S_1$ the sum of the terms that contain $\alpha_1$ and 
$\alpha_3 = \bar{\alpha}_1$, by $S_2$ the sum of the terms that contain $\alpha_2$ and $\alpha_4 = \bar{\alpha}_2$ and do not have $\alpha_1$ and its conjugate, and by $S_r$ the sum of the remaining terms.

We analyze each of the three sums separately. 

Because $|\alpha_i-1|$,  $i =5, \ldots, d$ are small, the sum $S_r$ is negative (or zero, when $d=4$). From now on we will assume that $d \ge D_0$ is sufficiently large, the case of bounded degree is much simpler and we will deal with it later on. The sum $S_r$ has $(d-4)$ choose $2$ terms each bounded above by $-c = -c(\epsilon)<0$, hence we have 
\begin{align}\label{eq:Sr}
S_r \leq -c\frac{(d-4)(d-5)}{2}.
\end{align}

Now back to the product formula for the discriminant. The terms $|\alpha_i-\alpha_j|$, $5\leq i<j\leq d$, are small depending on $\epsilon$. As $\Delta_\ell$ is a positive integer, it implies that the terms with $\alpha_1$ and $\alpha_3 = \bar{\alpha}_1$ are large. By choosing $\epsilon > 0$ sufficiently small we can assume that $|\alpha_1| > 10$. Notice that it shows that the geodesic associated to $W$ in the quotient hyperbolic space cannot be short, its length is bounded below by a constant which grows when $\epsilon$ is chosen for the almost law tends to zero. 

From $|\alpha_1| > 10$ we have $|\alpha_2| = |1/\alpha_1| < 1/10$ and $|\alpha_4| = |1/\bar{\alpha}_1| <1/10$. Recalling that $\mathrm{Re}(\alpha) \ge 0$ and that $\alpha_i$,  $i =5, \ldots, d$, are close to $1$ and have absolute value $1$, we conclude that all the terms in the sum $S_2$ are negative, hence $S_2 < 0$. 

Therefore, we have
\begin{align}\label{eq:sum1}
	S_1 &\geq \log \Delta_\ell, \text{ and }\\
\label{eq:sum2}
	S_1 &\geq - S_r. 
\end{align}

Next, we would like to bound the sum $S_1$ from above. The following inequality for all $i = 2, \ldots, d$ easily follows from the information we have about $\alpha$: 
\begin{align*}
    \log|\alpha_1 - \alpha_i| &\leq \log|\alpha| + 2.
\end{align*}
The same applies to the terms with $\alpha_3 = \bar{\alpha}_1$. Thus we have 
\begin{align}\label{eq:sum3}
    4d\log|\alpha| &> S_1.
\end{align}

Now from \eqref{eq:sum1} we get
\begin{align*}
	4d \log |\alpha| & \geq \log \Delta_\ell;
\end{align*}
\begin{align}\label{eq:disc}
	\log |\alpha| & \geq \frac{1}{4d} \log \Delta_\ell. 
\end{align} 
On the other hand, inequalities \eqref{eq:Sr}, \eqref{eq:sum2} and \eqref{eq:sum3} imply
\begin{align*}
4d \log |\alpha| \geq c\frac{(d-4)(d-5)}{2},
\end{align*}
where $c = c(\epsilon)$ is a positive constant, hence we have
\begin{align}\label{eq:deg}
\log|\alpha| \geq c_1(d-4).
\end{align}



\subsection{Height and volume}\label{sec:step4}
Now from one hand we can relate $\log|\alpha|$ to the normalized height $\hhat(F)$, and from the other hand we can relate the discriminant $\Delta_\ell$ to the covolume. 

For the first inequality recall that $W = w(A_0, B_0)$ and $\alpha$ is its eigenvalue with $|\alpha| > 1$. By the definition and the basic properties of the normalized height in \cite[Propositions~2.13 and 2.14]{Breu11}, 
we have:
\begin{align}\label{eq-displ}
\hhat(F) = \frac{\hhat(F^n)}{n}
\geq \frac{h(\alpha)}{n} = \frac{2\log |\alpha|}{nd},
\end{align}
where $n = |w|n_w$ is the length of $W$ as a product of the elements of $F$.

In view of \eqref{eq:deg}, formula \eqref{eq-displ} already shows that $\hhat(F)$ is bounded below by a positive constant. It remains to relate it to the volume.

For the second inequality we use Borel's volume formula \eqref{formula:maxcovolume}. 

The field $\ell$ is a quadratic extension of $k$, hence we have
\begin{align*}
	\Delta_k^2 \Delta_{\ell/k} &= \Delta_\ell.
\end{align*}

By \eqref{eq:disc}, $c \log \Delta_\ell \le d\log |\alpha|$. Hence we get
\begin{align}\label{eq2}
	\Delta_k^2 \leq e^{c_1 d (\log|\alpha|)} \text{ and } 
	\Delta_{\ell/k} \leq e^{c_1d (\log|\alpha|)}, 
\end{align}
with the constant $c_1 = c_1(\epsilon) > 0$. 

Coming back to the volume formula \eqref{formula:maxcovolume}, we conclude that the minimal covolume in the commensurability class of $\Gamma$ satisfies
\begin{align*}
	\covol(\Gamma_0) \leq c_2e^{c_3d(\log|\alpha|)}.
\end{align*}
Here we used the basic facts that $\Delta_{\ell/k} = \prod_{\mathcal{P}|\Delta(A)} N(\mathcal{P})$ and $\zeta_k(2) \leq\zeta_{\Q}(2)^{d_k}$. 

Together with \eqref{eq-displ} it gives the bound
\begin{align*}
	\covol(\Gamma_0) \leq c_4e^{c_5\hhat(F)d^2}.
\end{align*}

If this bound fails, then the group generated by $F$ is not Zariski dense and we can not guarantee  the existence of the generic element $W$. 

\subsection{Action on the Bruhat--Tits tree}\label{sec:step5}
We now consider other maximal arithmetic subgroups commensurable with $\Gamma$ and containing $W$. Let $\Gamma_1$ be a maximal arithmetic subgroup associated to a finite subset $S \subset V_f(k)$ and assume that $\Gamma_1 \supset \Gamma$.
The idea is as follows, when we look at the action of $\Gamma_1$ on the Bruhat--Tits tree of $\PGL_2(k_v)$ for $v\in S$, it has to be in the stabilizer of an edge. Hence the subgroup $\Gamma'_1$ of index $2$ in $\Gamma_1$ fixes an edge of the tree. Now fixing an edge implies that the image of $\Gamma'_1$ in $\PGL_2(\mathbb{F}_q)$, $\mathbb{F}_q$ is the residue field of $k$ at $v$, lies in a solvable subgroup (as fixing an edge implies that one fixes a point in the projective line $\mathrm{P}^1(\mathbb{F}_q)$). In our case it is a solvable group of the derived length $2$, thus taking two commutators guarantees that the reduction in $\PGL_2(\mathbb{F}_q)$ is actually trivial. 

Recall that we defined $W$ as the word of $[A^2, [B^2, A^2]]$ and its conjugate. It follows that for all prime ideals $\mathcal{P}$ of $\mathcal{O}_k$ corresponding to the places in $S$ the trace of $W$, which is an algebraic integer of $k$, satisfies
\begin{equation}\label{cong4.5}
|\tr(W)| = \alpha + 1/\alpha \equiv 2 (\mathrm{mod}\ \mathcal{P}).
\end{equation}
Let $\beta = \alpha + 1/\alpha-2$. Congruences \eqref{cong4.5} and the 
product formula imply that 
$$|N_{k/\Q}(\beta)| \geq \prod_{\mathcal{P}\in S} N(\mathcal{P}).$$
Now $N(\mathcal{P}) \geq 2$ implies that $\# S \leq \log_2|N_{k/\Q}(\beta)|$, and hence
\begin{align*}
    2^{\log_2|N_{k/\Q}(\beta)|} |N_{k/\Q}(\beta)| &\geq \prod_{\mathcal{P}\in S} (N(\mathcal{P}) + 1);\\
    |N_{k/\Q}(\beta)|^2 &\geq \prod_{\mathcal{P}\in S} (N(\mathcal{P}) + 1).
\end{align*}

Recall that $\alpha$ is a unit in a quadratic extension $\ell$ of the field $k$ whose Galois conjugates satisfy that $\alpha_1 = \alpha$ and $\alpha_3 = \bar\alpha$ have absolute values bigger than $10$ and positive real part, while $i = 5, \ldots, d$, $|\alpha_i - 1|\leq \epsilon <1$ (cf. Section~\ref{sec:step3}). It follows that 
\begin{multline*}
|N_{k/\Q}(\beta)|^2 = |N_{\ell/\Q}(\beta)| = |N_{\ell/\Q}\left(\frac{(\alpha - 1)^2}{\alpha}\right)| = \\
|(\alpha - 1)(\bar\alpha - 1)(1/\alpha-1)(1/\bar\alpha-1)(\alpha_5-1)\ldots(\alpha_d-1)|^2 \leq c^2|\alpha|^4.
\end{multline*}

We conclude that the generalized index satisfies 
\begin{align*}
    [\Gamma_0:\Gamma_1] &\leq c_1|\alpha|^4, \text{ and}\\
    \covol(\Gamma_0) &\leq c_2e^{c_3 \hhat(F)d^2}, 
\end{align*}
with $c_2, c_3 > 0$ depending only on $G$.

If $W$ is contained in a congruence subgroup $\Gamma(\mathcal{I})\subset \Gamma_1$, then  
$W \equiv \id (\mathrm{mod}\ \mathcal{I})$, and hence we have $N(\mathcal{I}) \leq c_4 |N_{k/\Q}(\beta)|$ with $\beta = \alpha + 1/\alpha - 2 \in k$.
It follows that $[\Gamma_1:\Gamma(\mathcal{I})] \leq c_5|\alpha|^6$, where we used the facts that the  dimension of $\PGL_2$ is equal to $3$ and that  $|N_{k/\Q}(\beta)| \leq c|\alpha|^2$, which is proved as before. Therefore, the result applies to any congruence subgroup $\Gamma(\mathcal{I})$ containing $\Gamma$.


\medskip

This finishes the proof for $G= \PGL_2(\C)$. We leave as an exercise for the interested reader to adjust the argument for $\PGL_2(\R)$. The general case of the product $\PGL_2(\R)^a\times\PGL_2(\C)^b$ requires some more work, we consider the details in the next section.

\section{The general case of the main results}

Let $G = \PGL_2(\R)^a\times\PGL_2(\C)^b$. The proof of Theorem~\ref{thm1} follows the same steps as the proof for $G = \PGL_2(\C)$ presented in the previous section but requires some necessary modifications. We proceed with the technical details.

Lemma~\ref{lemma-word} and its proof carry out to the general case with the minimal changes: the statement remains the same, in the proof instead of $\GL_4(\R)$ we consider the real embedding of $G$ in $\GL_{3a+4b}(\R)$ and then apply Lemma~\ref{lemma2} and \cite[Proposition~3.2]{EMO05} to finish the proof.

The only modification in Section~\ref{sec:step2} is that now $w_0$ is an almost law on the group $(\mathrm{O}_3)^a \times (\mathrm{O}_4)^b$ considered as a compact subgroup of $\GL_{3a+4b}(\R)$.

The combinatorial part of estimating the sums in Section~\ref{sec:step3} requires significant modifications and we will redo it from scratch.
We have $d = d_\ell = [\ell:\Q] \leq 2d_k$. The discriminant 
\begin{align*}
    \Delta_\ell &= \prod_{1\leq i < j \leq d} |\alpha_i - \alpha_j|,
\end{align*}
where $\alpha_1,\ldots, \alpha_d$ are the Galois conjugates of an eigenvalue of $W$. By abuse of notation, we will assume that $\alpha = \alpha_j$ is the conjugate with the maximal absolute value.

Because $w$ is an almost law we have that $\alpha_{d_0}, \ldots, \alpha_d$  are very close to $1$ with $d_0 = 2a+4b+1$, say, $|\alpha_i - 1|\leq \epsilon <1$, $i = d_0, \ldots, d$. 

We write 
\begin{align*}
\log \Delta_\ell = \sum_{1\leq i < j \leq d}\log |\alpha_i - \alpha_j|
= S_1 + S_2 + S_r,
\end{align*}
This time we denote by $S_1$ the sum of all terms that contain $\alpha_i$ with $|\alpha_i| > 1$. Notice that because $W$ is a generic element, the eigenvalue $\alpha$ satisfies this condition and thus $S_1$ is non-empty. In $S_2$ we collect all the terms with $|\alpha_i| < 1$ and $|\alpha_j| \leq 1$ or $|\alpha_i| \leq 1$ and $|\alpha_j| < 1$; and in $S_r$ the remaining terms with $|\alpha_i| = |\alpha_j| = 1$.

We need to estimate the sums $S_1$, $S_2$, and $S_r$. Compared to \ref{sec:step3}, this time we will use more crude estimates. First let us assume that the degree of the field $\ell$ is large, $d \ge D_0 \gg 1$. When the degree is bounded the proof simplifies significantly as we will see later on. In the argument below $c_i$ will denote positive constants. We have:

The sum $S_1$ has $\sim d$ terms each bounded above by $\log|\alpha| + c_1$. As in Section~\ref{sec:step3}, the product formula for $\Delta_\ell$ and the fact that $\Delta_\ell \ge 1$ together with the condition that $|\alpha_i - \alpha_j|$ are small for $d_0 \le i<j \le d$ imply that we can assume that $\log|\alpha| \ge c(\epsilon) > 10$. This implies that $\log|\alpha| + c_1 < c_2\log|\alpha|$ and 
\begin{equation}\label{eq:51}
    c_3d\log|\alpha| > S_1.
\end{equation}

The sum $S_2$ also has $\sim d$ terms and for each of them we have
$$\log|\alpha_i - \alpha_j| = \log\left(\frac{|\alpha_i - \alpha_j|}{2}2\right) = \log\frac{|\alpha_i - \alpha_j|}{2} + \log 2 < \log 2,$$
hence
\begin{equation}\label{eq:52}
    S_2 < c_4d.
\end{equation}

The remaining $S_r$ has $\sim d^2$ summands, is negative, and satisfies
\begin{equation}\label{eq:53}
    S_r \leq -c_5d^2.
\end{equation}

We obtain 
\begin{align*}
S_1 + S_2 &< c_3d\log|\alpha| + c_4d \le c_6d\log|\alpha|;\\
S_1 + S_2 &\ge \log \Delta_\ell;\\
S_1 + S_2 &\ge S_r \ge c_5d^2.
\end{align*}
In conclusion
\begin{align*}
c_6d\log|\alpha| &\ge \log \Delta_\ell;\\
c_6d\log|\alpha| &\ge c_5d^2.
\end{align*}
As in \ref{sec:step3}, this leads to
\begin{align*}
\log|\alpha| &\ge c_7d;\\
\log|\alpha| &\ge c_8\frac{1}{d}\log\Delta_\ell.
\end{align*}


The remaining steps of the proof follow as in \ref{sec:step4}--\ref{sec:step5}. The only required modification is that in \ref{sec:step5} instead of $|N_{\ell/\Q}(\beta)|\leq c|\alpha|^2$ we now have $|N_{\ell/\Q}(\beta)|\leq c|\alpha|^{a+2b}$. 
\qed

\begin{proof}[Proof of Corollary~\ref{cor1}]
The argument is similar to the proof of the arithmetic Margulis lemma in \cite{FHR22}. 
Let $F = \{\gamma\in\Gamma \mid d(x, \gamma x) \leq \epsilon_G\log(\covol(\Gamma_1))^{\frac12}\}$. We will show that the subgroup generated by $F$ is not Zariski dense.

Assume for the sake of contradiction that $\langle F \rangle$ is $\R$-Zariski dense in $G$. Using a variant of \cite[Corollary~1.7]{Breu11} derived from Theorem~\ref{thm1}, we find an element $\gamma\in F^{N_1}$ with an eigenvalue $\lambda$ such that 

$$h(\lambda) \ge c \max (\frac{\log(\covol(\Gamma_1))}{[k: \Q]^2}, 1) \geq c_1  \frac{\log(\covol(\Gamma_1)^{\frac12}}{[k: \Q]},$$ 
and therefore
$[k:\Q]h(\lambda) \ge c_1\log(\covol(\Gamma_1))^{\frac12} =: V$, where $k$ is the field of definition of $\Gamma$. 
Next we relate $\lambda$ to the translation length $\ell(\gamma)$. To this end recall that for $v\in V_\infty(k)$ the corresponding translation length is given by 
\begin{equation}\label{eq:cor1}
\ell_v(\gamma) = 2\log^+|\lambda_v| + 2\log^+|1/\lambda_v| \text{ (cf. \cite[Lemma~12.1.1]{MR-book})}.
\end{equation}

Hence we have:
$$\ell(\gamma) = \sqrt{\sum_{v\in V_\infty}\ell_v(\gamma)^2} \geq \frac{1}{\sqrt{|V_{\infty}|}}\sum_{v\in V_\infty}\ell_v(\gamma) \geq \frac{1}{\sqrt{|V_{\infty}|}}[k:\Q]h(\lambda),$$
where the first inequality follows from the Cauchy--Schwarz inequality, and the second is a consequence of \eqref{eq:cor1} and the definition of height.

It follows that 
$$\ell(\gamma) \geq a_X V,$$
for a constant $a_X > 0$ depending on the symmetric space $X$.

Therefore, 
$$\epsilon_G \ge \frac{a_X V}{N_1} = \frac{a_X c_1\log(\covol(\Gamma_1))^{\frac12}}{N_1}.$$ 
This proves that for $\epsilon_G$ small enough, the group generated by $F$ is not Zariski dense.    
\end{proof}

\section{Two applications}\label{sec:appls}

\subsection{Finiteness of arithmetic maximal reflection groups}
The problem of finiteness of conjugacy classes of arithmetic maximal hyperbolic reflection groups in a given dimension $n$ goes back to now classical work of Vinberg and Nikulin in 1970's. It remained open for several decades and was solved independently by Nikulin \cite{Nikulin07_fin} and Agol, Belolipetsky, Storm, and Whyte \cite{ABSW08}. The proof in \cite{ABSW08} relies on deep results from representation theory which allow to effectively bound the spectral gap of the Laplacian on a congruence arithmetic quotient space. Nikulin's proof in \cite{Nikulin07_fin} is very short but is heavily based on his previous work \cite{Nikulin80, Nikulin81} and some results on the spectral gap. We refer to \cite{Bel16-survey} for a detailed review of these and related results. 

Recently, Fisher and Hurtado suggested a new proof of the finiteness theorem \cite{FH23}. It completely avoids spectral bounds from the theory of automorphic forms but still relies on \cite{Nikulin80} to show finiteness of admissible fields of definition of a fixed degree. The new ingredient of the proof in \cite{FH23} is the arithmetic Margulis lemma from \cite{FHR22}.

Strong arithmetic Margulis lemma~\ref{cor1} allows us to make the argument independent of \cite{Nikulin80}. The proof follows the same lines as in \cite{FH23} and we shall only briefly review it here. 
First assume the dimension $n=2$. Let $\Gamma$ be an arithmetic group generated by reflections in the sides of an acute-angled polygon $P$, which is a maximal reflection group.  Let $\mathcal{E} = \{e_1, e_2, \dots , e_k\}$ be the edges of $P$ and consider the collection of balls $ \mathcal{B} = \{B_1, B_2, \dots , B_k\}$ of radius $R =\frac{\epsilon_G}{2}\log(\covol(\Gamma_1))^{\frac12}$ centered at the midpoints of the edges of $P$,  where $\epsilon_G$ is the constant in Lemma~\ref{cor1} and $\Gamma_1$ is the maximal arithmetic group containing $\Gamma$ (it is known that in this case $\Gamma_1$ is the normalizer of $\Gamma$ in $\PGL_2(\R)$).
If a ball $B_i \in \mathcal{B}$ intersects three edges (or intersect two other balls in $\mathcal{B}$), the group generated by reflections in these three sides will be typically a non-virtually solvable group (unless the three edges are adjacent and meet at right angles, the case that requires some further considerations). Therefore, if $\covol(\Gamma_1)$ is sufficiently large, Lemma~\ref{cor1} implies that the balls in $B$ typically cannot have triple intersections and intersect at most two edges of $P$. To illustrate the argument, assume there are no triple intersections of balls of $\mathcal{B}$ and that each ball in $\mathcal{B}$ intersects only one edge, and so half of each ball is contained in $P$. This implies that the area of $P$ has to be greater than $\frac{k}{4} \vol(B_{\HH^2}(R))$, but elementary hyperbolic geometry shows that $P$ has volume at most $(k-2)\pi$. If $R$ is sufficiently large we arrive at a contradiction. Therefore, covolume of $\Gamma_1$ is bounded above by a constant. By Borel's theorem \cite{Bor81} there are only finitely many conjugacy classes of such groups $\Gamma_1$, and hence there are only finitely many  conjugacy classes of arithmetic maximal reflection groups $\Gamma$. In the case $n \geq 3$ we can apply a similar argument to a two-dimensional face of $P$. In this case we need to recall some basic properties of hyperbolic polyhedra due to Andreev and Vinberg. We refer to \cite{FH23} for more details.

Notice that this application only requires the strong arithmetic Margulis lemma for $G = \PGL_2(\R)$.

\subsection{Benjamini--Schramm convergence of quotients of hyperbolic spaces}

In this subsection we give a short proof of Theorem~A from \cite{FR19} which does not rely on the estimates of orbital integrals from \cite{Fra21}:
\begin{theorem}
If $G = \PGL_2(\R)$ or $\PGL_2(\C)$ and $\Gamma_n$ is a sequence of arithmetic lattices in $G$, which are either all congruence and pairwise distinct, or pairwise non-commensurable, then the sequence of locally symmetric spaces $X_n = \Gamma_n\backslash X$, $X = \HH^2$ or $\HH^3$, converges in the Benjamini--Schramm sense to $X$.
\end{theorem}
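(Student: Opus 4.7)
The plan is to verify Benjamini--Schramm convergence in its standard form: for every $R > 0$,
$$\frac{\vol((X_n)_{<R})}{\vol(X_n)} \longrightarrow 0,$$
where $(X_n)_{<R}$ denotes the $R$-thin part of $X_n$. As a first step I would note that under either hypothesis on $(\Gamma_n)$, Borel's finiteness theorem forces $\vol(X_n)\to\infty$: only finitely many commensurability classes of arithmetic lattices in $G$ admit a maximal (resp.\ congruence) representative of covolume below any fixed bound. Replacing $\Gamma_n$ by a containing maximal lattice $\Gamma_{1,n}$ changes covolume only by a bounded factor, so it suffices to bound the $R$-thin part of $\Gamma_{1,n}\backslash X$.

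Fix $R > 0$ and set $R_0 = R_0(n) := \epsilon_G \log(\covol(\Gamma_{1,n}))^{1/2}$, which tends to infinity with $n$. For $n$ large enough, $2R < R_0$, so Corollary~\ref{cor1} applies at every $x \in X$: the set $F_x := \{\gamma \in \Gamma_{1,n} : d(x, \gamma x) \leq R_0\}$ generates a non-Zariski-dense subgroup of $G$. In the generic case this subgroup is virtually cyclic; the Fuchsian subcase occurring only for $\PGL_2(\C)$ with type~I arithmetic groups (cf.\ Remark~\ref{rem2}) I treat separately below.

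The main geometric content is a tube-disjointness argument. For a primitive hyperbolic or loxodromic $\gamma \in \Gamma_{1,n}$, let $T_{R_0}(\gamma)$ be the image in $X_n$ of the displacement set $\{x \in X : d(x, \gamma x) \leq R_0\}$. If two tubes $T_{R_0}(\gamma_1)$ and $T_{R_0}(\gamma_2)$ share a point, then a lift witnesses $\gamma_1, \gamma_2 \in F_{\tilde x}$, forcing $\langle \gamma_1, \gamma_2 \rangle$ virtually cyclic and the two primitives to coincide. Standard hyperbolic geometry gives $\vol(T_{R_0}(\gamma)) \asymp \ell(\gamma)\sinh^{d-1}(R_0)$ with $d = \dim X$ and $\ell(\gamma)$ the translation length, so disjointness yields
$$\sum_{[\gamma]\ \mathrm{primitive}} \ell(\gamma) \;\lesssim\; \frac{\vol(X_n)}{\sinh^{d-1}(R_0)}.$$
The $2R$-displacement tubes with $\ell(\gamma)\le 2R$ contribute at most $C\sinh^{d-1}(R)\sum_{[\gamma]}\ell(\gamma)$ to $\vol((X_n)_{<R})$, giving a ratio bounded by $\sinh^{d-1}(R)/\sinh^{d-1}(R_0) \to 0$; this is the heart of the argument.

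Cusp and elliptic contributions must be bounded separately. Each cusp cross-section has bounded area in terms of the base field, and the number of cusps grows sub-polynomially in $\vol(X_n)$ by Brauer--Siegel type estimates, so the $R$-thin cusp contribution is $o(\vol(X_n))$; elliptic loci are similar, bounded by the order of torsion. The Fuchsian subcase for $\PGL_2(\C)$ type~I lattices I would treat by the analogous codimension-one disjointness argument applied to $R_0$-neighborhoods of maximal immersed totally geodesic surfaces, invoking the $\PGL_2(\R)$-version of Corollary~\ref{cor1} on each Fuchsian subgroup. The principal obstacle is executing this last step uniformly and controlling the count of totally geodesic surfaces; the dominant hyperbolic/loxodromic contribution is handled cleanly by the strong Margulis gap of Corollary~\ref{cor1}.
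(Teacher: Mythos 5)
Your approach is genuinely different from the paper's. You attempt a direct geometric bound on the $R$-thin part via disjointness of displacement tubes, whereas the paper runs the argument through invariant random subgroups: it invokes the equivalence (from \cite[Prop.~3.2]{7S}) between Benjamini--Schramm convergence and weak-$*$ convergence of the IRS measures $\mu_{\Gamma_n}$ to $\delta(1)$, uses compactness of the IRS space to reduce to ruling out non-trivial accumulation points, and then uses Corollary~\ref{cor1} (applied to Chabauty-limits of the $\Gamma_n$) plus the Borel density theorem for IRS to show any such limit must be concentrated on a proper real-algebraic subgroup, hence trivial. The paper's route is softer and avoids several delicate volume estimates that your route forces you to confront.

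On the substance of your argument: the core idea for the loxodromic/hyperbolic contribution --- disjointness of $R_0$-displacement tubes plus a ratio $\vol(T_{2R})/\vol(T_{R_0}) \to 0$ --- is sound, but the displayed formulas are incorrect. You write $\vol(T_{R_0}(\gamma)) \asymp \ell(\gamma)\sinh^{d-1}(R_0)$, but $T_{R_0}(\gamma)$ is a \emph{displacement} set, whose tube radius $r_0$ depends on $\ell(\gamma)$: from $\sinh(R_0/2) \asymp \cosh(r_0)\sinh(\ell/2)$ one gets $r_0 \approx R_0/2 + \log(1/\ell)$, so in $\HH^2$ the volume is $\asymp e^{R_0/2}$ (independent of $\ell$) and in $\HH^3$ it is $\asymp e^{R_0}/\ell$, not $\ell\,e^{2R_0}$. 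Consequently your inequality $\sum_{[\gamma]} \ell(\gamma) \lesssim \vol(X_n)/\sinh^{d-1}(R_0)$ and the subsequent bound ``$C\sinh^{d-1}(R)\sum\ell(\gamma)$'' on the $2R$-thin volume are both false as stated for $\HH^3$ (the per-tube $2R$-volume diverges as $\ell\to 0$, so it is not controlled by $\sum\ell$). The correct route is to bound the \emph{ratio} $\vol(T_{2R}(\gamma))/\vol(T_{R_0}(\gamma))$ uniformly in $\ell$ by roughly $(\sinh R / \sinh(R_0/2))^{d-1}$, then sum. This is fixable, but as written the estimate is wrong.

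More seriously, the Fuchsian subcase (which, per Remark~\ref{rem2}, can occur for type~I arithmetic Kleinian groups) is a genuine gap that your sketch does not close. When $\langle F_x\rangle$ is Fuchsian rather than elementary, two loxodromic $\gamma_1,\gamma_2$ in the thin group can have distinct axes in a common geodesic plane, so tube disjointness fails outright. Your proposal to pass to disjoint $R_0$-neighborhoods of totally geodesic surfaces only controls the number/area of such surfaces; inside each neighborhood you still have to bound a three-dimensional thin region coming from short geodesics of the Fuchsian group, and the naive tube argument applied to those is exactly what just broke. Your cusp and elliptic estimates are similarly only sketched (``number of cusps grows sub-polynomially'' is imprecise --- Brauer--Siegel plus Borel's formula gives polynomial growth with exponent $<1$, which is what you actually need --- and torsion orders in arithmetic Kleinian groups are not uniformly bounded). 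The paper's IRS/Borel-density argument handles the Fuchsian, cuspidal, and elliptic contributions at once, essentially for free: a Fuchsian limit subgroup has all real traces and is therefore contained in a proper real Zariski-closed subgroup, which already contradicts Borel density. If you want to pursue the direct geometric route, you should expect the Fuchsian case to be the main technical investment.
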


Recall that a sequence $(X_n)_{n\in\N}$ \emph{Benjamini--Schramm converges} to $X$ if for every $R > 0$ we have
$$\lim_{n\to\infty} \frac{\vol((X_n)_{\le R})}{\vol(X_n)} = 0,$$
where $(X_n)_{\le R}$ denotes the subset of points of $X_n$ at which the injectivity radius is smaller than $R$. Intuitively this means that the injectivity radius around a typical point of $X_n$ gets very large as $n$ goes to infinity.

\begin{proof}
Let first $G = \PGL_2(\C)$. The strategy of the proof is similar to \cite[Section 8.3]{FHR22} where the authors establish a weaker result for a much wider class of groups $G$ using the arithmetic Margulis lemma. By \cite[Proposition~3.2]{7S}, a sequence of locally symmetric spaces $\Gamma_n\backslash X$ Benjamini--Schramm converges to $X$ if and only if the sequence of measures $\mu_{\Gamma_n}$ (also called \emph{invariant random subgroups} or \emph{IRS}) converges to $\delta(1)$ in weak-* topology. We refer to \cite[Sections~2-3]{7S} or \cite[Section~8]{FHR22} for the definitions and basic properties of the invariant random subgroups in the relation with the Benjamini--Schramm convergence. 
Since the space of IRS on $G$ is compact it suffices to show that any accumulation point of a sequence $(\mu_{\Gamma_n})$ is equal to $\delta(1)$. We will show that the strong arithmetic Margulis lemma forces any limit to be supported on a proper Zariski closed subgroup of $G$. Then Zariski density for IRS (cf. \cite[Theorem~2.9]{7S}) implies that such a limit must be concentrated on the trivial subgroup. 

Suppose to the contrary that $\mu_{\Gamma_n}$ accumulates at a non-trivial IRS $\mu$. This means that there exists a closed subgroup $\Lambda \subset G$, $\Lambda \neq \{1\}$ in the support of $\mu$, so that $\mu(V) > 0$ for any Chabauty-neighbourhood $V$ of $\Lambda$. It follows that $V$ intersects the conjugacy class of $\Gamma_n$ for infinitely many $n$. In particular, $\Lambda$ is a limit of a sequence $g_n \Gamma_{k_n} g_n^{-1}$ in the Chabauty topology. We will deduce from this and the strong arithmetic Margulis lemma that $\Lambda$ must be a proper subgroup, which contradicts the Borel density for IRS (see Theorem~2.9 in \cite{7S}).

To simplify notation assume that the sequence $\Gamma_n$ actually converges to $\Lambda$. Let $\epsilon_G$ be the constant given by the strong arithmetic Margulis lemma~\ref{cor1}. We fix the radius $R>0$. Let $\Gamma_{1,n}\supseteq\Gamma_n$ be a sequence of congruence subgroups of $G$ containing $\Gamma_n$. By Borel's theorem \cite{Bor81}, we have $\covol(\Gamma_{1,n}) \to +\infty$, hence we can assume that $\epsilon_g\log(\covol(\Gamma_{1,n}))^{\frac12} > R + 1$ for all $n$. Let
\begin{equation}\label{eq61}
\Lambda_{n,R} = \langle \Gamma_n\cap \mathrm{B}(R+1)\rangle,\ \Lambda_R = \langle \Lambda \cap \mathrm{B}(R)\rangle.
\end{equation}
By Lemma~\ref{cor1} and Remark~\ref{rem2}, we have two possible cases: either the group $\Lambda_{n,R}$ is virtually abelian or it is a Fuchsian group. By passing again to a subsequence, if necessary, we can assume that all the groups $\Lambda_{n,R}$ are of the same type.

In the first case we recall that virtually abelian subgroups of $\PGL_2(\C)$ are:
\begin{enumerate}
    \item the finite groups of symmetries of the regular solids;
    \item infinite cyclic and dihedral groups and their finite extensions;
    \item doubly periodic groups of translations and finite extensions. 
\end{enumerate}
Moreover, it is well known that in each of the cases a virtually abelian subgroup contains an abelian subgroup of bounded index. Hence there exist $A > 0$ and a sequence $\Theta_{n,R} \subset \Lambda_{n,R}$ of abelian subgroups such that $[\Lambda_{n,R} : \Theta_{n,R}] \leq A$ for all $n$. The sequence $\Theta_{n,R}$ converges to an abelian subgroup $\Theta_{R} \subset \Lambda$, moreover, since any element of $\Lambda\cap \mathrm{B}(R)$ is a limit of a sequence $\gamma_n \in \Gamma_n \cap \mathrm{B}(R+1)$, we see that $\Lambda_{R} \cap \Theta_{R}$ has index at most $A$ in $\Lambda_{R}$. It shows that the set $\mathcal{A}_R$ of abelian subgroups contained in $\Lambda_{R}$ with index at most $A$ is nonempty. Each set $\mathcal{A}_R$ is finite since $\Lambda_{R}$ is finitely generated and whenever $R < R'$ and $\Theta \in \mathcal{A}_{R'}$ we have $\Theta\cap\Lambda_{R} \in \mathcal{A}_R$, so we can pick a $\Theta_1 \in \mathcal{A}_1$ such that for infinitely many $N \in \N$ there exists a $\Theta_{N} \in \mathcal{A}_N$ with $\Theta_{1} \subset \Theta_{N}$. We can then pick an integer $N_2 > 1$ and $\Theta_{N_2} \in \mathcal{A}_{N_2}$ such that $\Theta_{1} \subset \Theta_{N_2}$ and for infinitely many $N \in\N$ there exists a $\Theta_{N} \in \mathcal{A}_N$ with $\Theta_{N_2} \subset \Theta_{N}$. This way we construct an increasing sequence $N_i \in \N$ and $\Theta_{N_i} \in \mathcal{A}_{N_i}$ which satisfy $\Theta_{N_i} \subset \Theta_{N_i+1}$. It follows that $\Theta = \bigcup_{i\in\N} \Theta_{N_i}$ is an abelian subgroup of $\Lambda = \bigcup_{i\in\N} \Lambda_{N_i}$ of index at most $A$, which contradicts the Borel density for IRS.

In the second case we have $h_n \Lambda_{n,R} h_n^{-1} \subset \PGL_2(\R)$. This implies that the elements of $\Lambda_{n,R}$ have real traces. As any element of $\Lambda\cap \mathrm{B}(R)$ is a limit of a sequence $\gamma_n \in \Gamma_n \cap \mathrm{B}(R+1)$, we have that all elements of $\Lambda_R$ generated by this set have real traces. Considering a sequence $R_i \to +\infty$, we conclude that $\Lambda = \bigcup_{i\in\N} \Lambda_{R_i}$ consists of elements with real traces and hence is a non-trivial closed proper subgroup of the group $G$ considered as a real algebraic group. This again contradicts the Borel density for IRS and finishes the proof of the theorem for $G = \PGL_2(\C)$.

For the group $G = \PGL_2(\R)$ the proof is simpler. The same argument applies here but now the groups $\Lambda_{n,R}$ in \eqref{eq61} are all virtually abelian with an abelian subgroup of index at most $2$.
\end{proof}

In this application we used the strong arithmetic Margulis lemma for $G = \PGL_2(\R)$ and $\PGL_2(\C)$.

\bibliography{biblio.bib}{}

\begin{thebibliography}{10}

\bibitem{7S}
{\sc M.~Abert, N.~Bergeron, I.~Biringer, T.~Gelander, N.~Nikolov, J.~Raimbault, and I.~Samet}, {\em On the growth of {$L^2$}-invariants for sequences of lattices in {L}ie groups}, Ann. of Math. (2), 185 (2017), pp.~711--790.

\bibitem{ABSW08}
{\sc I.~Agol, M.~Belolipetsky, P.~Storm, and K.~Whyte}, {\em {Finiteness of arithmetic hyperbolic reflection groups}}, Groups Geom. Dyn., 2 (2008), pp.~481--498.

\bibitem{Bel16-survey}
{\sc M.~Belolipetsky}, {\em Arithmetic hyperbolic reflection groups}, Bull. Amer. Math. Soc. (N.S.), 53 (2016), pp.~437--475.

\bibitem{BBKS23}
{\sc M.~Belolipetsky, N.~Bogachev, A.~Kolpakov, and L.~Slavich}, {\em Subspace stabilisers in hyperbolic lattices}, 2023.

\bibitem{BCDT24}
{\sc M.~Belolipetsky, G.~Cosac, C.~Dória, and G.~Teixeira~Paula}, {\em Geometry and arithmetic of semi-arithmetic {F}uchsian groups}, J. Lond. Math. Soc., 111 (2025), p.~e70087.

\bibitem{Bor81}
{\sc A.~Borel}, {\em Commensurability classes and volumes of hyperbolic {$3$}-manifolds}, Ann. Scuola Norm. Sup. Pisa Cl. Sci. (4), 8 (1981), pp.~1--33.

\bibitem{Breu11}
{\sc E.~Breuillard}, {\em A height gap theorem for finite subsets of {$\mathrm{ GL}_d(\overline{\mathbb{Q}})$} and nonamenable subgroups}, Ann. of Math. (2), 174 (2011), pp.~1057--1110.

\bibitem{CHL21}
{\sc L.~Chen, S.~Hurtado, and H.~Lee}, {\em A height gap in {$\mathrm{GL}_d(\overline{\mathbb{Q}})$} and almost laws}, 2021.

\bibitem{EGM-book}
{\sc J.~Elstrodt, F.~Grunewald, and J.~Mennicke}, {\em Groups acting on hyperbolic space. {Harmonic} analysis and number theory}, Springer Monogr. Math., Berlin: Springer, 1998.

\bibitem{EMO05}
{\sc A.~Eskin, S.~Mozes, and H.~Oh}, {\em On uniform exponential growth for linear groups}, Invent. Math., 160 (2005), pp.~1--30.

\bibitem{FH23}
{\sc D.~Fisher and S.~Hurtado}, {\em A new proof of finiteness of maximal arithmetic reflection groups}, Ann. H. Lebesgue, 6 (2023), pp.~151--159.

\bibitem{Fra21}
{\sc M.~Fraczyk}, {\em Strong limit multiplicity for arithmetic hyperbolic surfaces and 3-manifolds}, Invent. Math., 224 (2021), pp.~917--985.

\bibitem{FHR22}
{\sc M.~Fraczyk, S.~Hurtado, and J.~Raimbault}, {\em Homotopy type and homology versus volume for arithmetic locally symmetric spaces}, 2022.

\bibitem{FR19}
{\sc M.~Fraczyk and J.~Raimbault}, {\em Betti numbers of {Shimura} curves and arithmetic three-orbifolds}, Algebra Number Theory, 13 (2019), pp.~2359--2382.

\bibitem{MR-book}
{\sc C.~Maclachlan and A.~W. Reid}, {\em The arithmetic of hyperbolic 3-manifolds}, vol.~219 of Graduate Texts in Mathematics, Springer-Verlag, New York, 2003.

\bibitem{Nikulin80}
{\sc V.~V. Nikulin}, {\em {On the arithmetic groups generated by reflections in {L}oba\v cevski\u \i\ spaces}}, Izv. Akad. Nauk SSSR Ser. Mat., 44 (1980), pp.~637--669, 719--720.

\bibitem{Nikulin81}
\leavevmode\vrule height 2pt depth -1.6pt width 23pt, {\em {On the classification of arithmetic groups generated by reflections in {L}obachevski\u\i\ spaces}}, Izv. Akad. Nauk SSSR Ser. Mat., 45 (1981), pp.~113--142, 240.

\bibitem{Nikulin07_fin}
\leavevmode\vrule height 2pt depth -1.6pt width 23pt, {\em {Finiteness of the number of arithmetic groups generated by reflections in {L}obachevski\u\i\ spaces}}, Izv. Ross. Akad. Nauk Ser. Mat., 71 (2007), pp.~55--60.

\bibitem{PR1}
{\sc G.~Prasad and A.~S. Rapinchuk}, {\em Generic elements in {Zariski}-dense subgroups and isospectral locally symmetric spaces.}, in Thin groups and superstrong approximation. Selected papers based on the presentations at the workshop held at MSRI, Berkeley, CA, USA, February 6--10, 2012, Cambridge: Cambridge University Press, 2014, pp.~211--252.

\bibitem{Thom13}
{\sc A.~Thom}, {\em Convergent sequences in discrete groups}, Canad. Math. Bull., 56 (2013), pp.~424--433.

\end{thebibliography}
\bibliographystyle{siam}

\end{document}